\documentclass[hidelinks,onefignum,onetabnum]{siamart220329}



\usepackage{amsfonts,amssymb}
\usepackage{mathtools}
\usepackage{graphicx}
\usepackage{setspace}
\usepackage{hyperref}
\usepackage{color}
\usepackage{algorithm}
\usepackage{algorithmicx}
\usepackage{algpseudocode}
\algnewcommand\algorithmicinput{\textbf{Input:}}
\algnewcommand\Input{\item[\algorithmicinput]}
\algnewcommand\algorithmicoutput{\textbf{Output:}}
\algnewcommand\Output{\item[\algorithmicoutput]}
\usepackage{chngcntr}
\counterwithin{table}{section}

\ifpdf
  \DeclareGraphicsExtensions{.eps,.pdf,.png,.jpg}
\else
  \DeclareGraphicsExtensions{.eps}
\fi

\newcommand{\N}{\mathbb{N}}
\newcommand{\R}{\mathbb{R}}

\newcommand{\dist}{d}
\newcommand{\ball}{B}
\newcommand{\norm}[1]{\Vert #1 \Vert}
\newcommand{\ip}[2]{\langle #1, #2 \rangle}
\DeclareMathOperator*{\argmin}{argmin}
\DeclareMathOperator*{\lip}{Lip}
\DeclareMathOperator{\s}{s}

\newcommand{\proj}[2]{P_{#1}(#2)}

\makeatletter
\def\widebreve{\mathpalette\wide@breve}
\def\wide@breve#1#2{\sbox\z@{$#1#2$}%
     \mathop{\vbox{\m@th\ialign{##\crcr
\kern0.08em\brevefill#1{0.8\wd\z@}\crcr\noalign{\nointerlineskip}%
                    $\hss#1#2\hss$\crcr}}}\limits}
\def\brevefill#1#2{$\m@th\sbox\tw@{$#1($}%
  \hss\resizebox{#2}{\wd\tw@}{\rotatebox[origin=c]{90}{\upshape(}}\hss$}
\makeatletter

\newcommand{\gencone}[4]{{#1}_{#2}^{#4}(#3)} 
\newcommand{\tancone}[2]{\gencone{T}{#1}{#2}{}} 
\newcommand{\restancone}[2]{\gencone{\widebreve{T}}{#1}{#2}{}} 
\newcommand{\norcone}[2]{\gencone{N}{#1}{#2}{}} 

\newcommand{\oshort}[1]{\mkern 0.9mu\overline{\mkern-0.9mu#1\mkern-0.9mu}\mkern 0.9mu}
\newcommand{\ushort}[1]{\mkern 0.9mu\underline{\mkern-0.9mu#1\mkern-0.9mu}\mkern 0.9mu}

\DeclareMathOperator{\tr}{tr}
\newcommand{\tp}{\top}
\newcommand{\mpinv}{\dagger}
\DeclareMathOperator{\rank}{rank}
\newcommand{\st}{\mathrm{St}}

\newcommand{\pgd}{\mathrm{PGD}} 
\newcommand{\ppgdr}{\mathrm{P}^2\mathrm{GDR}}
\newcommand{\rfd}{\mathrm{RFD}} 
\newcommand{\rfdr}{\mathrm{RFDR}}
\newcommand{\crfd}{\mathrm{CRFD}} 
\newcommand{\crfdr}{\mathrm{CRFDR}} 
\newcommand{\erfd}{\mathrm{ERFD}} 
\newcommand{\erfdr}{\mathrm{ERFDR}} 
\newcommand{\hrtr}{\mathrm{HRTR}} 


\newsiamremark{remark}{Remark}
\newsiamremark{hypothesis}{Hypothesis}
\crefname{hypothesis}{Hypothesis}{Hypotheses}
\newsiamthm{claim}{Claim}

\headers{B-stationary points in low-rank optimization}{G. Olikier and P.-A. Absil}

\title{Computing Bouligand stationary points efficiently in low-rank optimization\thanks{\today
\funding{This work was funded by the Fonds de la Recherche Scientifique -- FNRS and the Fonds Wetenschappelijk Onderzoek -- Vlaanderen under EOS Project no 30468160, by the Fonds de la Recherche Scientifique -- FNRS under Grant no T.0001.23, by the ERC grant \#786854 G-Statistics from the European Research Council under the European Union's Horizon 2020 research and innovation program, and by the French government through the 3IA Côte d'Azur Investments ANR-19-P3IA-0002 managed by the National Research Agency.}}}

\author{Guillaume Olikier\thanks{Université Côte d'Azur and Inria, Epione Project Team, 2004 route des Lucioles - BP 93, 06902 Sophia Antipolis Cedex, France
  (\email{guillaume.olikier@inria.fr}).}
\and P.-A. Absil\thanks{ICTEAM Institute, UCLouvain, Avenue Georges Lema\^{\i}tre 4, 1348 Louvain-la-Neuve, Belgium
  (\email{pa.absil@uclouvain.be}).}
}

\usepackage{amsopn}
\DeclareMathOperator{\diag}{diag}


\ifpdf
\hypersetup{
  pdftitle={Computing Bouligand stationary points efficiently in low-rank optimization},
  pdfauthor={G. Olikier and P.-A. Absil}
}
\fi




\begin{document}

\maketitle

\begin{abstract}
This paper considers the problem of minimizing a differentiable function with locally Lipschitz continuous gradient on the algebraic variety of all $m$-by-$n$ real matrices of rank at most~$r$. Several definitions of stationarity exist for this nonconvex problem. Among them, Bouligand stationarity is the strongest necessary condition for local optimality. Only a handful of algorithms generate a sequence in the variety whose accumulation points are provably Bouligand stationary. Among them, the most parsimonious with (truncated) singular value decompositions (SVDs) or eigenvalue decompositions can still require a truncated SVD of a matrix whose rank can be as large as $\min\{m, n\}-r+1$ if the gradient does not have low rank, which is computationally prohibitive in the typical case where $r \ll \min\{m, n\}$. This paper proposes a first-order algorithm that generates a sequence in the variety whose accumulation points are Bouligand stationary while requiring SVDs of matrices whose smaller dimension is always at most $r$. A standard measure of Bouligand stationarity converges to zero along the bounded subsequences at a rate at least $O(1/\sqrt{i+1})$, where $i$ is the iteration counter. Furthermore, a rank-increasing scheme based on the proposed algorithm is presented, which can be of interest if the parameter $r$ is potentially overestimated.
\end{abstract}

\begin{keywords}
convergence analysis, stationary point, critical point, low-rank optimization, determinantal variety, first-order method, tangent cones, singular value decomposition, QR factorization
\end{keywords}

\begin{MSCcodes}
14M12, 65K10, 90C26, 90C30, 40A05
\end{MSCcodes}

\section{Introduction}
\label{sec:Introduction}
Low-rank optimization concerns the problem of minimizing a real-valued function over a space of matrices subject to an upper bound on their rank. Applications abound in various fields of science and engineering. The present work addresses specifically the problem
\begin{equation}
\label{eq:OptiDeterminantalVariety}
\min_{X \in \R_{\le r}^{m \times n}} f(X)
\end{equation}
of minimizing a differentiable function $f : \R^{m \times n} \to \R$ with locally Lipschitz continuous gradient on the determinantal variety
\begin{equation}
\label{eq:RealDeterminantalVariety}
\R_{\le r}^{m \times n} \coloneq \{X \in \R^{m \times n} \mid \rank X \le r\},
\end{equation}
$m$, $n$, and $r$ being positive integers such that $r < \min\{m, n\}$. The following basic facts about this problem are expounded in \cite[\S 1]{OlikierGallivanAbsil2024}: (i) determinantal varieties have been well studied in algebraic geometry; (ii) several instances of~\eqref{eq:OptiDeterminantalVariety} are important in many applications; (iii) in general, computing an exact or approximate minimizer of~\eqref{eq:OptiDeterminantalVariety} is intractable \cite{GillisGlineur2011}, and algorithms are only expected to find a stationary point of~\eqref{eq:OptiDeterminantalVariety} \cite{SchneiderUschmajew2015,ThemelisStellaPatrinos,HaLiuBarber2020,UschmajewVandereycken2020IMA,LiLuo2023,OlikierUschmajewVandereycken,LevinKileelBoumal2024,Pauwels,RebjockBoumal}, preferably a B-stationary point because B-stationarity is the strongest necessary condition for local optimality; (iv) $\pgd$ \cite[Algorithm~4.2]{OlikierWaldspurger}, $\ppgdr$ \cite[Algorithm~6.2]{OlikierGallivanAbsil2024}, $\rfdr$ \cite[Algorithm~3]{OlikierAbsil2023}, $\hrtr$ \cite[Algorithm~1]{LevinKileelBoumal2023}, and some of their hybridizations are the only algorithms known to generate a sequence in $\R_{\le r}^{m \times n}$ whose accumulation points are B-stationary for~\eqref{eq:OptiDeterminantalVariety}; (v) among these algorithms, $\rfdr$ is the most parsimonious with (truncated) singular value decompositions (SVDs) or eigenvalue decompositions \cite[Table~8.1]{OlikierGallivanAbsil2024}, though it can require a truncated SVD of a matrix whose rank can be as large as $\min\{m, n\}-r+1$ if the gradient does not have low rank, which is computationally prohibitive in the typical case where $r \ll \min\{m, n\}$.

This paper extends $\rfdr$ to a new class of methods, dubbed $\erfdr$, where ``E'' stands for ``extended'', which generate a sequence in $\R_{\le r}^{m \times n}$ whose accumulation points are B-stationary for~\eqref{eq:OptiDeterminantalVariety}. This class includes a subclass, called $\crfdr$, where ``C'' stands for ``cheap'', that involves SVDs of matrices whose smaller dimension is always at most $r$. A standard measure of B-stationarity converges to zero along bounded subsequences at a rate at least $O(1/\sqrt{i+1})$, where $i$ is the iteration counter. The same rate was achieved by \cite{OlikierUschmajewVandereycken}, but without guaranteed accumulation at B-stationary points of~\eqref{eq:OptiDeterminantalVariety}. Furthermore, a general rank-increasing algorithm based on $\erfdr$ is proposed, which can be of interest if the parameter $r$ in~\eqref{eq:OptiDeterminantalVariety} is potentially overestimated.

Here is a brief explanation of how $\rfdr$ was modified into $\crfdr$. Both share the same architecture: combining backtracking line searches and a rank reduction mechanism. They differ in the choice of the search direction, using respectively the $\rfd$ map \cite[Algorithm~1]{OlikierAbsil2023} and the $\crfd$ map (Definition~\ref{def:CRFD(R)map}) for their backtracking line searches. Given an input $X \in \R_{\le r}^{m \times n}$, the $\rfd$ map chooses its search direction to be an arbitrary projection of $-\nabla f(X)$ onto the restricted tangent cone to $\R_{\le r}^{m \times n}$ at $X$, which can be computed by \eqref{eq:ProjectionRestrictedTangentConeDeterminantalVariety} with $Z \coloneq -\nabla f(X)$. The above-mentioned computationally-prohibitive truncated SVDs are required if the rank $\ushort{r}$ of $X$ is smaller than~$r$; they are due to the second term of~\eqref{eq:ProjectionRestrictedTangentConeDeterminantalVariety}, which involves a projection onto $\R_{\le r-\ushort{r}}^{m \times n}$. In order to alleviate the computational burden due to this last projection, the proposed $\crfd$ map replaces the restricted tangent cone to $\R_{\le r}^{m \times n}$ at $X$ with a drastically simpler cone $\mathcal{C}$ of \emph{sparse} matrices of rank at most $r-\ushort{r}$, such as those mentioned in Table~\ref{tab:ClosedConeExamples}. It is readily seen that, for every $G \in \proj{\mathcal{C}}{-\nabla f(X)}$, the plain sum $X+G$ is of rank at most $r$; the \emph{retraction-free} nature of the $\rfd$ map is thus preserved. Moreover, this $G$ is a sufficient-descent direction in the sense of~\eqref{eq:ERFDmapSearchDirection} (Proposition~\ref{prop:CRFD}). With these two properties, the above-mentioned architecture yields a \emph{sufficient-descent map} (Lemma~\ref{lemma:ERFDRmap}) by an argument that was already used in the analysis of $\rfdr$~\cite[\S 6]{OlikierAbsil2023}. The convergence to the set of B-stationary points of~\eqref{eq:OptiDeterminantalVariety} follows right away (Theorem~\ref{thm:ERFDR}).

This paper is organized as follows. Preliminaries are covered in Section~\ref{sec:Preliminaries}. The $\erfd$ map, the $\erfdr$ algorithm, and a rank-increasing algorithm based on $\erfdr$ are defined and analyzed in Sections~\ref{sec:ERFDmap}, \ref{sec:ERFDR}, and~\ref{sec:RankIncreasingAlgorithm}, respectively. The $\crfd$ map and the $\crfdr$ algorithm are defined in Section~\ref{sec:CRFDmapCRFDR}. Their computational cost is analyzed in Section~\ref{sec:PracticalImplementationComputationalCostCRFDRmap}. Conclusions are drawn in Section~\ref{sec:Conclusion}.

\section{Preliminaries}
\label{sec:Preliminaries}
This section surveys the background material that is used throughout the paper; \cite{OlikierGallivanAbsil2024} and the references therein can be consulted for a more complete review of this material. Section~\ref{subsec:BasicFactsAboutSingularValues} recalls basic facts about singular values, and Section~\ref{subsec:ElementsVariationalGeometry} focuses on the background material from variational geometry.

The real vector space $\R^{m \times n}$ is endowed with the Frobenius inner product, defined by $\ip{X}{Y} \coloneq \tr Y^\tp X$, and $\norm{\cdot}$ denotes the Frobenius norm. For every $X \in \R^{m \times n}$ and $\rho \in (0, \infty)$, $\ball(X, \rho) \coloneq \{Y \in \R^{m \times n} \mid \norm{X-Y} < \rho\}$ and $\ball[X, \rho] \coloneq \{Y \in \R^{m \times n} \mid \norm{X-Y} \le \rho\}$ are respectively the open ball and the closed ball of center $X$ and radius $\rho$ in $\R^{m \times n}$. The zero matrix in $\R^{m \times n}$ is denoted by $0_{m \times n}$, and the identity matrix in $\R^{n \times n}$ is denoted by $I_n$. All operations on $\R$ or $\R^{m \times n}$ applied to subsets of $\R$ or $\R^{m \times n}$ are understood elementwise, and a singleton is identified with the element it contains.

Let $\mathcal{S}$ be a nonempty subset of $\R^{m \times n}$. The boundary of $\mathcal{S}$ is denoted by $\partial \mathcal{S}$. The distance from $X \in \R^{m \times n}$ to $\mathcal{S}$ is $\dist(X, \mathcal{S}) \coloneq \inf_{Y \in \mathcal{S}} \norm{X-Y}$, and the projection of $X$ onto $\mathcal{S}$ is $\proj{\mathcal{S}}{X} \coloneq \argmin_{Y \in \mathcal{S}} \norm{X-Y}$; the set $\proj{\mathcal{S}}{X}$ is nonempty and compact if $\mathcal{S}$ is closed \cite[Example~1.20]{RockafellarWets}. The set $\mathcal{S}$ is called a \emph{cone} if, for all $X \in \mathcal{S}$ and $\alpha \in [0, \infty)$, $\alpha X \in \mathcal{S}$. The \emph{polar} of a cone $\mathcal{C} \subseteq \R^{m \times n}$ is
\begin{equation*}
\mathcal{C}^* \coloneq \{Y \in \R^{m \times n} \mid \ip{X}{Y} \le 0 \; \forall X \in \mathcal{C}\},
\end{equation*}
which is a closed convex cone \cite[6(14)]{RockafellarWets}.

\begin{proposition}[{\cite[Proposition~A.6]{LevinKileelBoumal2023}}]
\label{prop:ProjectionOntoClosedCone}
Let $\mathcal{C} \subseteq \R^{m \times n}$ be a closed cone. For all $X \in \R^{m \times n}$ and $Y \in \proj{\mathcal{C}}{X}$,
\begin{equation*}
\ip{X}{Y} = \norm{Y}^2,
\end{equation*}
thus
\begin{equation*}
\norm{Y}^2 = \norm{X}^2 - \dist(X, \mathcal{C})^2,
\end{equation*}
and $\proj{\mathcal{C}}{X} = \{0_{m \times n}\}$ if and only if $X \in \mathcal{C}^*$.
\end{proposition}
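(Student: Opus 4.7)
The plan is to exploit the cone structure of $\mathcal{C}$ to reduce the first identity to a one-dimensional optimization along the ray through $Y$, from which the other two claims follow by straightforward algebra. Fix $X \in \R^{m \times n}$ and $Y \in \proj{\mathcal{C}}{X}$. Since $\mathcal{C}$ is a cone, $\alpha Y \in \mathcal{C}$ for every $\alpha \in [0, \infty)$, so the scalar function $\phi(\alpha) \coloneq \norm{X - \alpha Y}^2 = \norm{X}^2 - 2\alpha \ip{X}{Y} + \alpha^2 \norm{Y}^2$ attains its minimum over $[0, \infty)$ at $\alpha = 1$. The case $Y = 0_{m \times n}$ is trivial; otherwise $\phi$ is strictly convex with unique critical point $\ip{X}{Y}/\norm{Y}^2$, which must therefore equal $1$, yielding $\ip{X}{Y} = \norm{Y}^2$.

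The second identity is then immediate: $\dist(X, \mathcal{C})^2 = \norm{X - Y}^2 = \norm{X}^2 - 2\ip{X}{Y} + \norm{Y}^2 = \norm{X}^2 - \norm{Y}^2$. For the equivalence, if $X \in \mathcal{C}^*$, then any $Y \in \proj{\mathcal{C}}{X}$ (which is nonempty because $\mathcal{C}$ is closed) satisfies $\norm{Y}^2 = \ip{X}{Y} \le 0$ by the first identity, forcing $Y = 0_{m \times n}$. Conversely, if $\proj{\mathcal{C}}{X} = \{0_{m \times n}\}$, then for every $Z \in \mathcal{C}$ and every $\alpha \ge 0$ (using $\alpha Z \in \mathcal{C}$ again) one has $\norm{X}^2 \le \norm{X - \alpha Z}^2$, hence $2 \alpha \ip{X}{Z} \le \alpha^2 \norm{Z}^2$; dividing by $\alpha > 0$ and letting $\alpha \to 0^+$ gives $\ip{X}{Z} \le 0$, so $X \in \mathcal{C}^*$.

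There is no real obstacle here; the only subtleties are separating the degenerate case $Y = 0_{m \times n}$ when extracting the critical-point condition for $\phi$, and invoking the scaling property of $\mathcal{C}$ twice, once with $\alpha = 1$ as the minimizer of $\phi$ to obtain the first identity and once with $\alpha \to 0^+$ to recover the polar-cone direction of the equivalence.
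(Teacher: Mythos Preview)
The paper does not prove this proposition; it is quoted verbatim from \cite[Proposition~A.6]{LevinKileelBoumal2023} and used as a black box, so there is no in-paper argument to compare against. Your proof is correct and self-contained: the reduction to the one-variable quadratic $\phi(\alpha) = \norm{X-\alpha Y}^2$ over $[0,\infty)$ is the standard way to derive $\ip{X}{Y} = \norm{Y}^2$, and your handling of both directions of the equivalence is clean, including the limiting argument $\alpha \to 0^+$ for the converse. The only point worth making explicit is that $\alpha=1$ lies in the \emph{interior} of $[0,\infty)$, so the first-order condition $\phi'(1)=0$ applies even though the domain has a boundary; you implicitly use this when identifying the minimizer with the critical point, and it would not hurt to say it.
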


\subsection{Basic facts about singular values}
\label{subsec:BasicFactsAboutSingularValues}
For every $\ushort{r} \in \{0, \dots, n\}$,
\begin{equation*}
\st(\ushort{r}, n) \coloneq \{U \in \R^{n \times \ushort{r}} \mid U^\tp U = I_{\ushort{r}}\}
\end{equation*}
is a Stiefel manifold \cite[\S 3.3.2]{AbsilMahonySepulchre}.
Let $X \in \R^{m \times n}$. The singular values of $X$ are denoted by $\sigma_1(X) \ge \dots \ge \sigma_{\min\{m, n\}}(X) \ge 0$. A full SVD of $X$ is a factorization $U \Sigma V^\tp$ of $X$, where $U \in \st(m, m)$, $\Sigma = \diag(\sigma_1(X), \dots, \sigma_{\ushort{r}}(X), 0_{m-\ushort{r} \times n-\ushort{r}})$, $V \in \st(n, n)$, and $\ushort{r} = \rank X$. (The $\diag$ operator returns a block diagonal matrix \cite[\S 1.3.1]{GolubVanLoan} whose diagonal blocks are the arguments given to the operator.) Given a full SVD $U \Sigma V^\tp$ of $X$, $s \in \{0, \dots, \min\{m, n\}-1\}$, and $\ushort{s} \coloneq \min\{\ushort{r}, s\}$, the factorizations $U(\mathord{:}, 1\mathord{:}\ushort{r}) \Sigma(1\mathord{:}\ushort{r}, 1\mathord{:}\ushort{r}) V(\mathord{:}, 1\mathord{:}\ushort{r})^\tp$ and $U(\mathord{:}, 1\mathord{:}\ushort{s}) \Sigma(1\mathord{:}\ushort{s}, 1\mathord{:}\ushort{s}) V(\mathord{:}, 1\mathord{:}\ushort{s})^\tp$ are respectively called an SVD of $X$ and a truncated SVD of rank at most $s$ of $X$. By the Eckart--Young theorem \cite{EckartYoung1936}, $\proj{\R_{\le s}^{m \times n}}{X}$ is the set of all truncated SVDs of rank at most $s$ of $X$.

Proposition~\ref{prop:SingularValuesLipschitz} states that the singular values are Lipschitz continuous with unit Lipschitz constant, a property that is used in the proof of Lemma~\ref{lemma:ERFDRmap}.

\begin{proposition}[{\cite[Corollary~8.6.2]{GolubVanLoan}}]
\label{prop:SingularValuesLipschitz}
For every $X, Y \in \R^{m \times n}$ and $j \in \{1, \dots, \min\{m, n\}\}$,
\begin{equation*}
|\sigma_j(X)-\sigma_j(Y)| \le \sigma_1(X-Y) \le \norm{X-Y}.
\end{equation*}
\end{proposition}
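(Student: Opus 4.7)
The plan is to dispatch the two inequalities separately. The second one, $\sigma_1(X-Y) \le \norm{X-Y}$, is immediate: since $\norm{X-Y}^2 = \sum_{k=1}^{\min\{m,n\}} \sigma_k(X-Y)^2$ and all singular values are nonnegative, the largest squared singular value is bounded above by the whole sum.

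For the first inequality, I would invoke the Courant--Fischer min-max characterization
\[
\sigma_j(A) = \max_{\substack{S \subseteq \R^n \\ \dim S = j}} \; \min_{\substack{v \in S \\ \norm{v} = 1}} \norm{Av},
\]
valid for any $A \in \R^{m \times n}$ and $j \in \{1, \dots, \min\{m,n\}\}$. Fix a $j$-dimensional subspace $S_Y \subseteq \R^n$ that achieves the max for $Y$, so that $\min_{v \in S_Y,\, \norm{v}=1} \norm{Yv} = \sigma_j(Y)$. For every unit vector $v \in S_Y$, the reverse triangle inequality, combined with the operator-norm bound $\norm{(X-Y)v} \le \sigma_1(X-Y)$, gives $\norm{Xv} \ge \norm{Yv} - \norm{(X-Y)v} \ge \sigma_j(Y) - \sigma_1(X-Y)$. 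Minimizing over such $v$ and then applying the min-max formula for $X$ to the admissible subspace $S_Y$ yields $\sigma_j(X) \ge \sigma_j(Y) - \sigma_1(X-Y)$. Swapping the roles of $X$ and $Y$ produces the symmetric bound, hence $|\sigma_j(X) - \sigma_j(Y)| \le \sigma_1(X-Y)$.

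I do not anticipate a real obstacle: the only point requiring care is quoting the min-max identity in the correct rectangular form, which follows from the usual Hermitian version applied to $A^\tp A$ (its ordered eigenvalues are the squared singular values of $A$, with the convention that $\sigma_j(A) \coloneq 0$ for $j > \rank A$). Alternatively one could reduce to the Hermitian case via the symmetric dilation of $A$ and invoke Weyl's monotonicity inequality for eigenvalues, but the direct route above is shorter and self-contained.
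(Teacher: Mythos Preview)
Your argument is correct. The paper itself does not prove this statement; it is merely quoted from \cite[Corollary~8.6.2]{GolubVanLoan} without a proof, so there is no in-paper argument to compare against. Your route via the Courant--Fischer min-max characterization is the standard one and is essentially how the cited reference derives it as well: the result there is obtained from Weyl's inequality for the eigenvalues of Hermitian perturbations, applied to the symmetric dilation (the very alternative you mention). Either path is short and self-contained; yours has the minor advantage of working directly with $\norm{Av}$ rather than detouring through an auxiliary Hermitian matrix.
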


\subsection{Elements of variational geometry}
\label{subsec:ElementsVariationalGeometry}
Let $\mathcal{S}$ be a nonempty subset of $\R^{m \times n}$. A matrix $Z \in \R^{m \times n}$ is said to be \emph{tangent} to $\mathcal{S}$ at $X \in \mathcal{S}$ if there exist sequences $(X_i)_{i \in \N}$ in $\mathcal{S}$ converging to $X$ and $(t_i)_{i \in \N}$ in $(0, \infty)$ such that the sequence $(\frac{X_i-X}{t_i})_{i \in \N}$ converges to $Z$ \cite[Definition~6.1]{RockafellarWets}. The set of all tangents to $\mathcal{S}$ at $X \in \mathcal{S}$ is a closed cone \cite[Proposition~6.2]{RockafellarWets} called the \emph{(Bouligand) tangent cone} to $\mathcal{S}$ at $X$ and denoted by $\tancone{\mathcal{S}}{X}$.

\begin{definition}
\label{def:B-Stationarity}
A point $X \in \R_{\le r}^{m \times n}$ is said to be \emph{Bouligand stationary (B-stationary)} for~\eqref{eq:OptiDeterminantalVariety} if $\ip{\nabla f(X)}{Z} \ge 0$ for all $Z \in \tancone{\R_{\le r}^{m \times n}}{X}$.
\end{definition}

By Proposition~\ref{prop:ProjectionOntoClosedCone}, the function
\begin{equation}
\label{eq:NormProjectionNegativeGradientOntoTangentConeDeterminantalVariety}
\s(\cdot; f, \R_{\le r}^{m \times n}) : \R_{\le r}^{m \times n} \to \R : X \mapsto \norm{\proj{\tancone{\R_{\le r}^{m \times n}}{X}}{-\nabla f(X)}}
\end{equation}
is well defined. It is called a measure of B-stationarity for~\eqref{eq:OptiDeterminantalVariety} because a point in $\R_{\le r}^{m \times n}$ is B-stationary for~\eqref{eq:OptiDeterminantalVariety} if and only if it is a zero of this function; see \cite[\S 2.1]{SchneiderUschmajew2015} or \cite[Proposition~2.5]{LevinKileelBoumal2023}. This function is continuous on the smooth part
\begin{equation*}
\R_r^{m \times n} \coloneq \{X \in \R^{m \times n} \mid \rank X = r\}
\end{equation*}
of $\R_{\le r}^{m \times n}$ but can fail to be lower semicontinuous at every point of the singular part
\begin{equation*}
\R_{< r}^{m \times n} \coloneq \{X \in \R^{m \times n} \mid \rank X < r\}
\end{equation*}
of $\R_{\le r}^{m \times n}$ \cite[\S 1.1]{OlikierGallivanAbsil2024} because $\R_{\le r}^{m \times n}$ is not Clarke regular on $\R_{< r}^{m \times n}$ \cite[Remark~4]{SchneiderUschmajew2015}.

Given an embedded submanifold $\mathcal{M}$ of $\R^{m \times n}$ \cite[\S 3.3.1]{AbsilMahonySepulchre}, the tangent and normal spaces to $\mathcal{M}$ at $X \in \mathcal{M}$, which are the orthogonal complements of each other, are respectively denoted by $\tancone{\mathcal{M}}{X}$ and $\norcone{\mathcal{M}}{X}$; this is consistent since, by \cite[Example~6.8]{RockafellarWets}, the tangent cone to $\mathcal{M}$ at one of its points coincides with the tangent space to $\mathcal{M}$ at that point.

\begin{definition}
\label{def:RestrictedTangentCone}
Given $\mu \in (0, 1]$, a \emph{restricted tangent cone} to $\mathcal{S}$ is a map that associates with every $X \in \mathcal{S}$ a closed cone $\restancone{\mathcal{S}}{X}$ such that $\restancone{\mathcal{S}}{X} \subseteq \tancone{\mathcal{S}}{X}$, $X+\restancone{\mathcal{S}}{X} \subseteq \mathcal{S}$, and, for all $Z \in \R^{m \times n}$, $\norm{\proj{\restancone{\mathcal{S}}{X}}{Z}} \ge \mu \norm{\proj{\tancone{\mathcal{S}}{X}}{Z}}$.
\end{definition}

The rest of this section reviews descriptions of the tangent cone and restricted tangent cone to $\R_{\le r}^{m \times n}$ at $X \in \R_{\le r}^{m \times n}$, and the projections onto them, based on the Moore--Penrose inverse $X^\mpinv$; equivalent descriptions based on block matrices can be found, e.g., in \cite[\S 3]{OlikierAbsil2023}. The practical implications of these descriptions for the $\rfd$ map \cite[Algorithm~1]{OlikierAbsil2023} are also highlighted. Let $\ushort{r} \coloneq \rank X$.

By \cite[Proposition~4.1]{HelmkeShayman1995}, the tangent space to $\R_{\ushort{r}}^{m \times n}$ at $X$ is
\begin{equation}
\label{eq:TangentSpaceFixedRankManifold}
\tancone{\R_{\ushort{r}}^{m \times n}}{X}
= \R^{m \times m} X + X \R^{n \times n}
\end{equation}
and thus the normal space to $\R_{\ushort{r}}^{m \times n}$ at $X$ is
\begin{equation}
\label{eq:NormalSpaceFixedRankManifold}
\norcone{\R_{\ushort{r}}^{m \times n}}{X}
= (I_m-XX^\mpinv) \R^{m \times n} (I_n-X^\mpinv X).
\end{equation}
By \cite[Theorem~3.2]{SchneiderUschmajew2015}, the tangent cone to $\R_{\le r}^{m \times n}$ at $X$ is
\begin{equation}
\label{eq:TangentConeDeterminantalVariety}
\tancone{\R_{\le r}^{m \times n}}{X}
= \tancone{\R_{\ushort{r}}^{m \times n}}{X} + \left(\norcone{\R_{\ushort{r}}^{m \times n}}{X} \cap \R_{\le r-\ushort{r}}^{m \times n}\right).
\end{equation}
By \cite[Definition~3.1]{OlikierAbsil2023}, the restricted tangent cone to $\R_{\ushort{r}}^{m \times n}$ at $X$ is
\begin{equation}
\label{eq:RestrictedTangentConeFixedRankManifold}
\restancone{\R_{\ushort{r}}^{m \times n}}{X}
= \R^{m \times m} X \cup X \R^{n \times n}
\end{equation}
and the restricted tangent cone to $\R_{\le r}^{m \times n}$ at $X$ is
\begin{equation}
\label{eq:RestrictedTangentConeDeterminantalVariety}
\restancone{\R_{\le r}^{m \times n}}{X}
= \restancone{\R_{\ushort{r}}^{m \times n}}{X} + \left(\norcone{\R_{\ushort{r}}^{m \times n}}{X} \cap \R_{\le r-\ushort{r}}^{m \times n}\right).
\end{equation}
Crucially, by subadditivity of the rank, \eqref{eq:TangentSpaceFixedRankManifold} implies
\begin{equation}
\label{eq:MovingInTanSpaceFixedRankManifoldAtMostDoublesRank}
X+\tancone{\R_{\ushort{r}}^{m \times n}}{X} \subseteq \R_{\le 2\ushort{r}}^{m \times n},
\end{equation}
and \eqref{eq:RestrictedTangentConeFixedRankManifold} implies
\begin{equation}
\label{eq:MovingInResTanConeFixedRankManifoldPreservesFeasibility}
X+\restancone{\R_{\ushort{r}}^{m \times n}}{X} \subseteq \R_{\le \ushort{r}}^{m \times n}.
\end{equation}
By subadditivity of the rank, \eqref{eq:TangentConeDeterminantalVariety} and \eqref{eq:MovingInTanSpaceFixedRankManifoldAtMostDoublesRank} yield
\begin{equation}
\label{eq:MovingInTanConeDetVarAtMostDoublesRank}
X+\tancone{\R_{\le r}^{m \times n}}{X} \subseteq \R_{\le r+\ushort{r}}^{m \times n},
\end{equation}
and \eqref{eq:RestrictedTangentConeDeterminantalVariety} and \eqref{eq:MovingInResTanConeFixedRankManifoldPreservesFeasibility} yield
\begin{equation}
\label{eq:MovingInResTanConePreservesFeasibility}
X+\restancone{\R_{\le r}^{m \times n}}{X} \subseteq \R_{\le r}^{m \times n}.
\end{equation}
By \cite[Theorem~3.1 and Corollary~3.3]{SchneiderUschmajew2015} and \cite[Proposition~3.2]{OlikierAbsil2023}, for all $Z \in \R^{m \times n}$,
\begin{align}
\label{eq:ProjectionTangentSpaceFixedRankManifold}
\proj{\tancone{\R_{\ushort{r}}^{m \times n}}{X}}{Z}
&= X X^\mpinv Z + Z X^\mpinv X - X X^\mpinv Z X^\mpinv X,\\
\label{eq:ProjectionNormalSpaceFixedRankManifold}
\proj{\norcone{\R_{\ushort{r}}^{m \times n}}{X}}{Z}
&= Z - \proj{\tancone{\R_{\ushort{r}}^{m \times n}}{X}}{Z}
= (I_m-XX^\mpinv)Z(I_n-X^\mpinv X),\\
\label{eq:ProjectionRestrictedTangentConeFixedRankManifold}
\proj{\restancone{\R_{\ushort{r}}^{m \times n}}{X}}{Z}
&= \left\{\begin{array}{ll}
X X^\mpinv Z & \text{if } \norm{X X^\mpinv Z} > \norm{Z X^\mpinv X},\\
\{X X^\mpinv Z, Z X^\mpinv X\} & \text{if } \norm{X X^\mpinv Z} = \norm{Z X^\mpinv X},\\
Z X^\mpinv X & \text{if } \norm{X X^\mpinv Z} < \norm{Z X^\mpinv X},
\end{array}\right.\\
\label{eq:ProjectionTangentConeDeterminantalVariety}
\proj{\tancone{\R_{\le r}^{m \times n}}{X}}{Z}
&= \proj{\tancone{\R_{\ushort{r}}^{m \times n}}{X}}{Z} + \proj{\R_{\le r-\ushort{r}}^{m \times n}}{\proj{\norcone{\R_{\ushort{r}}^{m \times n}}{X}}{Z}},\\
\label{eq:ProjectionRestrictedTangentConeDeterminantalVariety}
\proj{\restancone{\R_{\le r}^{m \times n}}{X}}{Z}
&= \proj{\restancone{\R_{\ushort{r}}^{m \times n}}{X}}{Z} + \proj{\R_{\le r-\ushort{r}}^{m \times n}}{\proj{\norcone{\R_{\ushort{r}}^{m \times n}}{X}}{Z}},\\
\label{eq:NormProjectionRestrictedTangentConeDeterminantalVariety}
\norm{\proj{\tancone{\R_{\le r}^{m \times n}}{X}}{Z}}
&\ge \norm{\proj{\restancone{\R_{\le r}^{m \times n}}{X}}{Z}}
\ge \frac{1}{\sqrt{2}} \norm{\proj{\tancone{\R_{\le r}^{m \times n}}{X}}{Z}},\\
\label{eq:NormProjectionTangentConeDeterminantalVariety}
\norm{Z}
&\ge \norm{\proj{\tancone{\R_{\le r}^{m \times n}}{X}}{Z}}
\ge \sqrt{\frac{r-\ushort{r}}{\min\{m,n\}-\ushort{r}}} \norm{Z}.
\end{align}

As indicated in Section~\ref{sec:Introduction}, the $\rfd$ map performs a backtracking line search along an arbitrary element of $\proj{\restancone{\R_{\le r}^{m \times n}}{X}}{-\nabla f(X)}$. If $\ushort{r} = r$, \eqref{eq:ProjectionRestrictedTangentConeDeterminantalVariety} reduces to \eqref{eq:ProjectionRestrictedTangentConeFixedRankManifold} because its second term is $0_{m \times n}$. Computing \eqref{eq:ProjectionRestrictedTangentConeFixedRankManifold} merely requires orthonormal bases of the row and column spaces of $X$ and matrix multiplications. If $\ushort{r} < r$, then computing the second term of~\eqref{eq:ProjectionRestrictedTangentConeDeterminantalVariety} requires a truncated SVD of rank at most $r-\ushort{r}$ of a matrix whose rank can be as large as $\min\{m, n\}-\ushort{r}$ by~\eqref{eq:ProjectionNormalSpaceFixedRankManifold}. This is computationally prohibitive in the typical case where $r \ll \min\{m, n\}$, and thus raises the question of the possibility of finding a computationally cheaper surrogate for~\eqref{eq:ProjectionRestrictedTangentConeDeterminantalVariety} that preserves the properties of the $\rfd$ map. The question is answered positively in Section~\ref{sec:CRFDmapCRFDR} based on the analysis conducted in Section~\ref{sec:ERFDmap}.

\section{The $\erfd$ map}
\label{sec:ERFDmap}
In this section, the $\erfd$ map is defined (Algorithm~\ref{algo:ERFDmap}) and analyzed (Corollary~\ref{coro:ERFDmapArmijoCondition}). This serves as a basis for the definition and analysis of the $\erfdr$ algorithm in Section~\ref{sec:ERFDR} since the latter uses the $\erfd$ map as a subroutine.

Given parameters $\oshort{\alpha} \in (0, \infty)$, $\kappa_1 \in (0, \frac{1}{2}]$, and $\kappa_2 \in (0, 1]$ and an input $X \in \R_{\le r}^{m \times n}$, the $\erfd$ map chooses $G \in \R^{m \times n}$ such that
\begin{equation}
\label{eq:ERFDmapSearchDirection}
\ip{G}{-\nabla f(X)} \ge \max\{\kappa_1 \s(X; f, \R_{\le r}^{m \times n})^2, \kappa_2 \norm{G}^2\}
\end{equation}
and $X+\alpha G \in \R_{\le r}^{m \times n}$ for all $\alpha \in [0, \oshort{\alpha}]$, then performs a backtracking line search along $G$: it computes $f(X+\alpha G)$ for decreasing values of the step size $\alpha \in (0, \oshort{\alpha}]$ and outputs $X+\alpha G$ as soon as an Armijo condition is satisfied. Thus, the $\erfd$ map is retraction-free: it updates its input by moving along a straight line.

The inequality~\eqref{eq:ERFDmapSearchDirection} means that the decrease in $f$ along $G$ at $X$ is at least a fraction of that along an arbitrary element of $\proj{\tancone{\R_{\le r}^{m \times n}}{X}}{-\nabla f(X)}$, and that this cannot be achieved by choosing $G$ too large. It is related to \cite[(2)--(3)]{GrippoLamparielloLucidi} by the Cauchy--Schwarz inequality. Specifically, \cite[(2)--(3)]{GrippoLamparielloLucidi} implies \eqref{eq:ERFDmapSearchDirection} with $\kappa_1 = c_1$ and $\kappa_2 = \frac{c_1}{c_2^2}$, and \eqref{eq:ERFDmapSearchDirection} with $\s(X; f, \R_{\le r}^{m \times n})$ replaced with $\norm{\nabla f(X)}$ implies \cite[(2)--(3)]{GrippoLamparielloLucidi} with $c_1 = \kappa_1$ and $c_2 = \frac{1}{\kappa_2}$. The inequality~\eqref{eq:ERFDmapSearchDirection} also implies the angle condition \cite[(2.15)]{SchneiderUschmajew2015} with $\omega = \sqrt{\kappa_1\kappa_2}$.

If $\s(X; f, \R_{\le r}^{m \times n}) = 0$, then the set of all $G \in \R^{m \times n}$ satisfying \eqref{eq:ERFDmapSearchDirection} and $X+\alpha G \in \R_{\le r}^{m \times n}$ for all $\alpha \in [0, \oshort{\alpha}]$ is $\{0_{m \times n}\}$. In practice, $\s(X; f, \R_{\le r}^{m \times n})$ should not be computed if $\rank X < r$; in that case, by the second inequality of~\eqref{eq:NormProjectionTangentConeDeterminantalVariety}, $\s(X; f, \R_{\le r}^{m \times n}) = 0$ if and only if $\norm{\nabla f(X)} = 0$.

By~\eqref{eq:MovingInResTanConePreservesFeasibility}, if $G \in \restancone{\R_{\le r}^{m \times n}}{X}$, then $X+\alpha G \in \R_{\le r}^{m \times n}$ for all $\alpha \in (0, \infty)$. Moreover, the $\erfd$ map extends the $\rfd$ map \cite[Algorithm~1]{OlikierAbsil2023} because every $G \in \proj{\restancone{\R_{\le r}^{m \times n}}{X}}{-\nabla f(X)}$ satisfies \eqref{eq:ERFDmapSearchDirection} with $\kappa_1 = \frac{1}{2}$ and $\kappa_2 = 1$. Hence, the subset of $\R^{m \times n}$ in which the $\erfd$ map chooses an element is not empty.

\begin{algorithm}[H]
\caption{$\erfd$ map on $\R_{\le r}^{m \times n}$}
\label{algo:ERFDmap}
\begin{algorithmic}[1]
\Require
$(f, r, \ushort{\alpha}, \oshort{\alpha}, \beta, c, \kappa_1, \kappa_2)$ where $f : \R^{m \times n} \to \R$ is differentiable with $\nabla f$ locally Lipschitz continuous, $r < \min\{m, n\}$ is a positive integer, $0 < \ushort{\alpha} \le \oshort{\alpha} < \infty$, $\beta, c \in (0, 1)$, $\kappa_1 \in (0, \frac{1}{2}]$, and $\kappa_2 \in (0, 1]$.
\Input
$X \in \R_{\le r}^{m \times n}$.
\Output
a point in $\erfd(X; f, r, \ushort{\alpha}, \oshort{\alpha}, \beta, c, \kappa_1, \kappa_2)$.

\State
Choose $G \in \R^{m \times n}$ satisfying \eqref{eq:ERFDmapSearchDirection} and $X+\alpha G \in \R_{\le r}^{m \times n}$ for all $\alpha \in [0, \oshort{\alpha}]$;
\label{algo:ERFDmap:SearchDirection}
\State
Choose $\alpha \in [\ushort{\alpha}, \oshort{\alpha}]$;
\label{algo:ERFDmap:InitialStepSize}
\While
{$f(X + \alpha G) > f(X) + c \, \alpha \, \ip{\nabla f(X)}{G}$}
\State
$\alpha \gets \alpha \beta$;
\EndWhile
\State
Return $X + \alpha G$.
\end{algorithmic}
\end{algorithm}

By \cite[Lemma~1.2.3]{Nesterov2018}, since $\nabla f$ is locally Lipschitz continuous, i.e., for every closed ball $\mathcal{B} \subsetneq \R^{m \times n}$,
\begin{equation*}
\lip_{\mathcal{B}}(\nabla f) \coloneq \sup_{\substack{X, Y \in \mathcal{B} \\ X \ne Y}} \frac{\norm{\nabla f(X) - \nabla f(Y)}}{\norm{X-Y}} < \infty,
\end{equation*}
it holds for all $X, Y \in \mathcal{B}$ that
\begin{equation}
\label{eq:InequalityLipschitzContinuousGradient}
|f(Y) - f(X) - \ip{\nabla f(X)}{Y-X}| \le \frac{\lip_{\mathcal{B}}(\nabla f)}{2} \norm{Y-X}^2.
\end{equation}

Proposition~\ref{prop:ERFDmapUpperBoundCost} and Corollary~\ref{coro:ERFDmapArmijoCondition} respectively extend \cite[Proposition~4.1 and Corollary~4.2]{OlikierAbsil2023}.

\begin{proposition}
\label{prop:ERFDmapUpperBoundCost}
Let $\oshort{\alpha} \in (0, \infty)$, $\kappa_1 \in (0, \frac{1}{2}]$, $\kappa_2 \in (0, 1]$, and $X \in \R_{\le r}^{m \times n}$. Let $\mathcal{B} \subsetneq \R^{m \times n}$ be a closed ball containing $\ball[X, \frac{\oshort{\alpha}}{\kappa_2} \norm{\nabla f(X)}]$ if $\norm{\nabla f(X)} \ne 0$. Let $G \in \R^{m \times n}$ satisfy \eqref{eq:ERFDmapSearchDirection} and $X+\alpha G \in \R_{\le r}^{m \times n}$ for all $\alpha \in [0, \oshort{\alpha}]$. Then, for all $\alpha \in [0, \oshort{\alpha}]$,
\begin{equation}
\label{eq:ERFDmapUpperBoundCost}
f(X + \alpha G) \le f(X) + \alpha \left(1-\frac{\alpha}{2\kappa_2}\lip_\mathcal{B}(\nabla f)\right) \ip{\nabla f(X)}{G}.
\end{equation}
\end{proposition}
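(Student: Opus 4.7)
The plan is to apply the standard descent lemma \eqref{eq:InequalityLipschitzContinuousGradient} to the pair $X$, $X+\alpha G$ and then use the second inequality in \eqref{eq:ERFDmapSearchDirection}, namely $\kappa_2 \norm{G}^2 \le \ip{G}{-\nabla f(X)}$, to replace the resulting $\norm{G}^2$ term with a multiple of $-\ip{\nabla f(X)}{G}$. The only nonroutine part is verifying that the segment $\{X + \alpha G : \alpha \in [0, \oshort{\alpha}]\}$ lies in $\mathcal{B}$, so that $\lip_\mathcal{B}(\nabla f)$ is the applicable Lipschitz constant.

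First I would dispose of the degenerate case $\nabla f(X) = 0_{m \times n}$: by \eqref{eq:ERFDmapSearchDirection}, $\kappa_2 \norm{G}^2 \le \ip{G}{-\nabla f(X)} = 0$, so $G = 0_{m \times n}$ and both sides of \eqref{eq:ERFDmapUpperBoundCost} equal $f(X)$. For the remaining case $\nabla f(X) \ne 0_{m \times n}$, I would combine \eqref{eq:ERFDmapSearchDirection} with the Cauchy--Schwarz inequality to obtain $\kappa_2 \norm{G}^2 \le \ip{G}{-\nabla f(X)} \le \norm{G} \, \norm{\nabla f(X)}$, hence $\norm{G} \le \frac{1}{\kappa_2} \norm{\nabla f(X)}$. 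Consequently, for every $\alpha \in [0, \oshort{\alpha}]$, $\norm{\alpha G} \le \frac{\oshort{\alpha}}{\kappa_2} \norm{\nabla f(X)}$, so that $X + \alpha G \in \ball[X, \frac{\oshort{\alpha}}{\kappa_2} \norm{\nabla f(X)}] \subseteq \mathcal{B}$; of course, $X \in \mathcal{B}$ as well.

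Having confirmed that both $X$ and $X + \alpha G$ lie in $\mathcal{B}$, I would apply \eqref{eq:InequalityLipschitzContinuousGradient} with $Y \coloneq X + \alpha G$ to get
\begin{equation*}
f(X + \alpha G) \le f(X) + \alpha \ip{\nabla f(X)}{G} + \frac{\lip_\mathcal{B}(\nabla f)}{2} \alpha^2 \norm{G}^2.
\end{equation*}
Using $\norm{G}^2 \le \frac{1}{\kappa_2} \ip{G}{-\nabla f(X)} = -\frac{1}{\kappa_2} \ip{\nabla f(X)}{G}$ from \eqref{eq:ERFDmapSearchDirection} and factoring $\alpha \ip{\nabla f(X)}{G}$ yields \eqref{eq:ERFDmapUpperBoundCost}. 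The main (small) obstacle is ensuring containment in $\mathcal{B}$; this is precisely why the assumption on $\mathcal{B}$ is phrased in terms of the radius $\frac{\oshort{\alpha}}{\kappa_2} \norm{\nabla f(X)}$, which is exactly the bound coming from Cauchy--Schwarz applied to \eqref{eq:ERFDmapSearchDirection}.
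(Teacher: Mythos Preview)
Your proposal is correct and follows essentially the same approach as the paper's proof: bound $\norm{G}$ via Cauchy--Schwarz applied to \eqref{eq:ERFDmapSearchDirection} to ensure the segment lies in $\mathcal{B}$, apply the descent lemma \eqref{eq:InequalityLipschitzContinuousGradient}, and then use $\norm{G}^2 \le -\frac{1}{\kappa_2}\ip{\nabla f(X)}{G}$ to absorb the quadratic term. The only difference is that you explicitly separate out the degenerate case $\nabla f(X)=0_{m\times n}$, whereas the paper handles it implicitly (the bound $\norm{G}\le\frac{1}{\kappa_2}\norm{\nabla f(X)}$ already forces $G=0_{m\times n}$ there).
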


\begin{proof}
Since $\norm{G}^2 \le \frac{1}{\kappa_2} \ip{G}{-\nabla f(X)} \le \frac{1}{\kappa_2} \norm{G} \norm{\nabla f(X)}$, it holds that $\norm{G} \le \frac{1}{\kappa_2} \norm{\nabla f(X)}$. Thus, $X+\alpha G \in \ball[X, \frac{\oshort{\alpha}}{\kappa_2} \norm{\nabla f(X)}]$ for all $\alpha \in [0, \oshort{\alpha}]$. The inequality \eqref{eq:ERFDmapUpperBoundCost} is based on \eqref{eq:InequalityLipschitzContinuousGradient}:
\begin{align*}
f(X+\alpha G)-f(X)
&\le \ip{\nabla f(X)}{(X+\alpha G)-X} + \frac{\lip_\mathcal{B}(\nabla f)}{2} \norm{(X+\alpha G)-X}^2\\
&= \alpha \ip{\nabla f(X)}{G} + \frac{\alpha^2}{2} \lip_\mathcal{B}(\nabla f) \norm{G}^2\\
&\le \alpha \left(1-\frac{\alpha}{2\kappa_2}\lip_\mathcal{B}(\nabla f)\right) \ip{\nabla f(X)}{G}.
\end{align*}
\end{proof}

Corollary~\ref{coro:ERFDmapArmijoCondition} provides a lower bound on the step size of the $\erfd$ map. It plays an instrumental role in the proof of Lemma~\ref{lemma:ERFDRmap}.

\begin{corollary}
\label{coro:ERFDmapArmijoCondition}
Let $\mathcal{B}$ be a closed ball as in Proposition~\ref{prop:ERFDmapUpperBoundCost}. The while loop in Algorithm~\ref{algo:ERFDmap} terminates after at most
\begin{equation}
\label{eq:ERFDmapMaxNumIterationsWhile}
\max\left\{0, \left\lceil\ln\left(\frac{2\kappa_2(1-c)}{\alpha_0\lip_\mathcal{B}(\nabla f)}\right)/\ln(\beta)\right\rceil\right\}
\end{equation}
iterations, where $\alpha_0$ is the initial step size chosen in line~\ref{algo:ERFDmap:InitialStepSize}. Moreover, every $\tilde{X} \in \hyperref[algo:ERFDmap]{\erfd}(X; f, r, \ushort{\alpha}, \oshort{\alpha}, \beta, c, \kappa_1, \kappa_2)$ satisfies the Armijo condition
\begin{equation}
\label{eq:ERFDmapArmijoCondition}
f(\tilde{X}) \le f(X) + c \, \alpha \, \ip{\nabla f(X)}{G}
\end{equation}
with a step size $\alpha \in \left[\min\left\{\ushort{\alpha}, \frac{2\beta\kappa_2(1-c)}{\lip_\mathcal{B}(\nabla f)}\right\}, \oshort{\alpha}\right]$, which implies
\begin{equation}
\label{eq:ERFDmapCostDecrease}
f(\tilde{X}) \le f(X) - c \, \kappa_1 \, \alpha \s(X; f, \R_{\le r}^{m \times n})^2.
\end{equation}
\end{corollary}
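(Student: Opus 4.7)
The plan is to combine the quadratic upper bound from Proposition~\ref{prop:ERFDmapUpperBoundCost} with the definition of the backtracking loop. First I would note that, by~\eqref{eq:ERFDmapSearchDirection}, $\ip{\nabla f(X)}{G} = -\ip{G}{-\nabla f(X)} \le -\kappa_2 \norm{G}^2 \le 0$, so dividing by this quantity reverses inequalities. Comparing \eqref{eq:ERFDmapUpperBoundCost} with the Armijo test $f(X+\alpha G) \le f(X) + c\,\alpha\,\ip{\nabla f(X)}{G}$, a sufficient condition for the test to be passed is
\begin{equation*}
1 - \frac{\alpha}{2\kappa_2}\lip_\mathcal{B}(\nabla f) \ge c,
\qquad\text{i.e.,}\qquad
\alpha \le \frac{2\kappa_2(1-c)}{\lip_\mathcal{B}(\nabla f)}.
\end{equation*}

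Then I would bound the number of backtracking steps. Starting from the initial $\alpha_0 \in [\ushort{\alpha}, \oshort{\alpha}]$ chosen in line~\ref{algo:ERFDmap:InitialStepSize}, after $k$ reductions the step size is $\alpha_0 \beta^k$. If $\alpha_0$ already satisfies the above sufficient condition, the loop exits at once and no iteration is needed. Otherwise one needs $\alpha_0 \beta^k \le \frac{2\kappa_2(1-c)}{\lip_\mathcal{B}(\nabla f)}$; taking logarithms and using $\ln\beta < 0$ yields the bound \eqref{eq:ERFDmapMaxNumIterationsWhile}, and the maximum with $0$ covers the case when no reduction is necessary.

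For the lower bound on the accepted step size $\alpha$, I would distinguish the same two cases: either $k=0$, so $\alpha = \alpha_0 \ge \ushort{\alpha}$; or $k\ge 1$, in which case the previous trial $\alpha/\beta$ failed the Armijo test and therefore must have violated the sufficient condition, so $\alpha/\beta > \frac{2\kappa_2(1-c)}{\lip_\mathcal{B}(\nabla f)}$, whence $\alpha > \frac{2\beta\kappa_2(1-c)}{\lip_\mathcal{B}(\nabla f)}$. Taking the minimum over the two cases gives the stated interval. Finally, \eqref{eq:ERFDmapCostDecrease} follows by substituting the first inequality of \eqref{eq:ERFDmapSearchDirection}, namely $\ip{\nabla f(X)}{G} \le -\kappa_1 \s(X; f, \R_{\le r}^{m \times n})^2$, into the Armijo condition \eqref{eq:ERFDmapArmijoCondition}, using that $c,\alpha > 0$.

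There is no real obstacle here: the argument is the classical backtracking-line-search analysis adapted to the slightly non-standard descent condition \eqref{eq:ERFDmapSearchDirection}. The only point that requires a little care is the sign bookkeeping, since \eqref{eq:ERFDmapSearchDirection} is phrased in terms of $-\nabla f(X)$ and must be rewritten as $\ip{\nabla f(X)}{G} \le -\kappa_1 \s(X; f, \R_{\le r}^{m \times n})^2$ and $\ip{\nabla f(X)}{G} \le -\kappa_2 \norm{G}^2$ before being combined with the bound of Proposition~\ref{prop:ERFDmapUpperBoundCost}.
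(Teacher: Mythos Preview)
Your proposal is correct and follows essentially the same route as the paper: derive the threshold $\alpha_* = \frac{2\kappa_2(1-c)}{\lip_\mathcal{B}(\nabla f)}$ from Proposition~\ref{prop:ERFDmapUpperBoundCost}, convert it into an iteration bound via $\alpha_0\beta^k \le \alpha_*$, and obtain the lower bound on the accepted step size from the fact that the previous trial must have exceeded $\alpha_*$. The only addition in the paper is an explicit one-line treatment of the degenerate case $\ip{\nabla f(X)}{G} = 0$ (equivalently $G = 0_{m \times n}$), where the Armijo test holds trivially and no backtracking occurs; your argument covers this implicitly, but your remark about ``dividing by this quantity'' would not apply there, so it is cleaner to dispose of that case separately as the paper does.
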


\begin{proof}
If $\ip{\nabla f(X)}{G} = 0$, then $G = 0_{m \times n}$, hence the while loop is not executed and $\tilde{X} = X$. Assume that $\ip{\nabla f(X)}{G} \ne 0$. Define $\alpha_* \coloneq \frac{2\kappa_2(1-c)}{\lip_\mathcal{B}(\nabla f)}$. For all $\alpha \in (0, \infty)$,
\begin{equation*}
f(X) + \alpha \left(1-\frac{\alpha}{2\kappa_2}\lip_\mathcal{B}(\nabla f)\right) \ip{\nabla f(X)}{G}
\le f(X) + c \, \alpha \, \ip{\nabla f(X)}{G}
\quad\text{iff}\quad
\alpha \le \alpha_*.
\end{equation*}
Since the left-hand side of the first inequality is an upper bound on $f(X + \alpha G)$ for all $\alpha \in (0, \oshort{\alpha}]$, the Armijo condition \eqref{eq:ERFDmapArmijoCondition} is necessarily satisfied if $\alpha \in (0, \min\{\oshort{\alpha}, \alpha_*\}]$.
Therefore, at the latest, the while loop ends after iteration $i \in \N \setminus \{0\}$ with $\alpha = \alpha_0\beta^i$ such that $\frac{\alpha}{\beta} > \alpha_*$, hence $i < 1+\ln(\alpha_*/\alpha_0)/\ln(\beta)$, and thus $i \le \lceil\ln(\alpha_*/\alpha_0)/\ln(\beta)\rceil$.
\end{proof}

\section{The $\erfdr$ algorithm and its convergence analysis}
\label{sec:ERFDR}
The $\erfdr$ map, defined as Algorithm~\ref{algo:ERFDRmap}, is the iteration map of the $\erfdr$ algorithm. Given an input $X \in \R_{\le r}^{m \times n}$, the $\erfdr$ map proceeds as follows: (i) it applies the $\erfd$ map (Algorithm~\ref{algo:ERFDmap}) to $X$, thereby producing a point $\tilde{X}$, (ii) if $\sigma_r(X)$ is positive but smaller than some threshold $\Delta \in (0, \infty)$, it applies the $\erfd$ map to a projection $\hat{X}$ of $X$ onto $\R_{r-1}^{m \times n}$, then producing a point $\tilde{X}^\mathrm{R}$, and (iii) it outputs a point among $\tilde{X}$ and $\tilde{X}^\mathrm{R}$ that maximally decreases $f$.

\begin{algorithm}[H]
\caption{$\erfdr$ map on $\R_{\le r}^{m \times n}$}
\label{algo:ERFDRmap}
\begin{algorithmic}[1]
\Require
$(f, r, \ushort{\alpha}, \oshort{\alpha}, \beta, c, \kappa_1, \kappa_2, \Delta)$ where $f : \R^{m \times n} \to \R$ is differentiable with $\nabla f$ locally Lipschitz continuous, $r < \min\{m, n\}$ is a positive integer, $0 < \ushort{\alpha} \le \oshort{\alpha} < \infty$, $\beta, c \in (0, 1)$, $\kappa_1 \in (0, \frac{1}{2}]$, $\kappa_2 \in (0, 1]$, and $\Delta \in (0, \infty)$.
\Input
$X \in \R_{\le r}^{m \times n}$.
\Output
a point in $\erfdr(X; f, r, \ushort{\alpha}, \oshort{\alpha}, \beta, c, \kappa_1, \kappa_2, \Delta)$.

\State
Choose $\tilde{X} \in \hyperref[algo:ERFDmap]{\erfd}(X; f, r, \ushort{\alpha}, \oshort{\alpha}, \beta, c, \kappa_1, \kappa_2)$;
\label{algo:ERFDRmap:ERFD}
\If
{$\sigma_r(X) \in (0, \Delta]$}
\State
Choose $\hat{X} \in \proj{\R_{r-1}^{m \times n}}{X}$;
\State
Choose $\tilde{X}^\mathrm{R} \in \hyperref[algo:ERFDmap]{\erfd}(\hat{X}; f, r, \ushort{\alpha}, \oshort{\alpha}, \beta, c, \kappa_1, \kappa_2)$;
\label{algo:ERFDRmap:ERFD_RankReduction}
\State
Return $Y \in \argmin_{\{\tilde{X}, \tilde{X}^\mathrm{R}\}} f$.
\Else
\State
Return $\tilde{X}$.
\EndIf
\end{algorithmic}
\end{algorithm}

The $\erfdr$ algorithm is defined as Algorithm~\ref{algo:ERFDR}. It generates a sequence in $\R_{\le r}^{m \times n}$ along which $f$ is strictly decreasing.

\begin{algorithm}[H]
\caption{$\erfdr$ on $\R_{\le r}^{m \times n}$}
\label{algo:ERFDR}
\begin{algorithmic}[1]
\Require
$(f, r, \ushort{\alpha}, \oshort{\alpha}, \beta, c, \kappa_1, \kappa_2, \Delta)$ where $f : \R^{m \times n} \to \R$ is differentiable with $\nabla f$ locally Lipschitz continuous, $r < \min\{m, n\}$ is a positive integer, $0 < \ushort{\alpha} \le \oshort{\alpha} < \infty$, $\beta, c \in (0, 1)$, $\kappa_1 \in (0, \frac{1}{2}]$, $\kappa_2 \in (0, 1]$, and $\Delta \in (0, \infty)$.
\Input
$X_0 \in \R_{\le r}^{m \times n}$.
\Output
a sequence in $\R_{\le r}^{m \times n}$.

\State
$i \gets 0$;
\While
{$\s(X_i; f, \R_{\le r}^{m \times n}) > 0$}
\State
Choose $\Delta_i \in [\Delta, \infty)$;
\State
Choose $X_{i+1} \in \hyperref[algo:ERFDRmap]{\erfdr}(X_i; f, r, \ushort{\alpha}, \oshort{\alpha}, \beta, c, \kappa_1, \kappa_2, \Delta_i)$;
\State
$i \gets i+1$;
\EndWhile
\end{algorithmic}
\end{algorithm}

Algorithms~\ref{algo:ERFDRmap} and~\ref{algo:ERFDR} are analyzed in Section~\ref{subsec:ConvergenceAnalysis} by deploying the strategy laid out in~\cite[\S 3]{OlikierGallivanAbsil2024}. The crucial step is to prove that the $\erfdr$ map is an \emph{$(f, \R_{\le r}^{m \times n})$-sufficient-descent map} (Lemma~\ref{lemma:ERFDRmap}). The choice of the parameter $\Delta$ is briefly discussed in Section~\ref{subsec:ChoiceDelta}.

\subsection{Convergence analysis}
\label{subsec:ConvergenceAnalysis}
Lemma~\ref{lemma:ERFDRmap} states that, for every $\ushort{X} \in \R_{\le r}^{m \times n}$ that is not B-stationary for~\eqref{eq:OptiDeterminantalVariety}, the decrease in $f$ obtained by applying the $\erfdr$ map to every $X \in \R_{\le r}^{m \times n}$ sufficiently close to $\ushort{X}$ is bounded away from zero.

\begin{lemma}
\label{lemma:ERFDRmap}
The $\erfdr$ map (Algorithm~\ref{algo:ERFDRmap}) is an $(f, \R_{\le r}^{m \times n})$-sufficient-descent map in the sense of \cite[Definition~3.1]{OlikierGallivanAbsil2024}: for every $\ushort{X} \in \R_{\le r}^{m \times n}$ that is not B-stationary for~\eqref{eq:OptiDeterminantalVariety}, there exist $\varepsilon(\ushort{X}), \delta(\ushort{X}) \in (0,\infty)$ such that, for all $X \in \ball[\ushort{X}, \varepsilon(\ushort{X})] \cap \R_{\le r}^{m \times n}$ and $Y \in \hyperref[algo:ERFDRmap]{\erfdr}(X; f, r, \ushort{\alpha}, \oshort{\alpha}, \beta, c, \kappa_1, \kappa_2, \Delta)$,
\begin{equation}
\label{eq:SufficientDecrease}
f(Y) - f(X) \le - \delta(\ushort{X}).
\end{equation}
\end{lemma}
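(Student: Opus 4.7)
Since $\ushort{X}$ is not B-stationary, $\s(\ushort{X}; f, \R_{\le r}^{m\times n})>0$, which by Proposition~\ref{prop:ProjectionOntoClosedCone} (applied to $\mathcal{C}=\tancone{\R_{\le r}^{m\times n}}{\ushort{X}}$) forces $\nabla f(\ushort{X})\neq 0$. Let $\ushort{q}\coloneq\rank\ushort{X}$. The plan is to apply Corollary~\ref{coro:ERFDmapArmijoCondition} to either the $\erfd$ call at $X$ or the $\erfd$ call at $\hat{X}$, leveraging the following consequence of~\eqref{eq:NormProjectionTangentConeDeterminantalVariety}: whenever $\rank W<r$, one has $\s(W;f,\R_{\le r}^{m\times n})\ge\sqrt{1/(\min\{m,n\}-r+1)}\,\|\nabla f(W)\|$. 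Combined with continuity of $\nabla f$, this yields a uniform positive lower bound on $\s$ at any point of rank strictly less than $r$ lying close enough to $\ushort{X}$.

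\textbf{Uniform constants and the case $\ushort{q}=r$.} I would shrink $\varepsilon$ to enforce: (i) $\varepsilon\le\Delta$; (ii) $\tfrac12\|\nabla f(\ushort{X})\|\le\|\nabla f(W)\|\le 2\|\nabla f(\ushort{X})\|$ for every $W\in\ball[\ushort{X},2\varepsilon]$; and (iii) there is a fixed closed ball $\mathcal{B}$ covering every $\erfd$ displacement used in the analysis (i.e., $\mathcal{B}\supseteq\ball[W,(\oshort{\alpha}/\kappa_2)\|\nabla f(W)\|]$ for all $W\in\ball[\ushort{X},2\varepsilon]$), on which $\nabla f$ is Lipschitz with constant $L$. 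Corollary~\ref{coro:ERFDmapArmijoCondition} then furnishes a common step-size lower bound $\alpha_{\min}\coloneq\min\{\ushort{\alpha},2\beta\kappa_2(1-c)/L\}>0$. If $\ushort{q}=r$, I would further shrink $\varepsilon$ so that, by Proposition~\ref{prop:SingularValuesLipschitz}, $\rank X=r$ throughout $\ball[\ushort{X},\varepsilon]\cap\R_{\le r}^{m\times n}$ and, by continuity of $\s$ on $\R_r^{m\times n}$, $\s(X)\ge\s(\ushort{X})/2$; then~\eqref{eq:ERFDmapCostDecrease} applied to the $\erfd$ call at $X$ yields $f(\tilde{X})-f(X)\le-c\kappa_1\alpha_{\min}\s(\ushort{X})^2/4$, and since $f(Y)\le f(\tilde{X})$ by construction of the $\erfdr$ map, this case is settled.

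\textbf{The case $\ushort{q}<r$ and the gluing obstacle.} Assume $\ushort{q}<r$. Proposition~\ref{prop:SingularValuesLipschitz} then forces $\sigma_r(X)\le\varepsilon\le\Delta$ on $\ball[\ushort{X},\varepsilon]$. If $\rank X<r$, the key observation gives $\s(X)\ge C\coloneq\sqrt{1/(\min\{m,n\}-r+1)}\,\|\nabla f(\ushort{X})\|/2>0$, and~\eqref{eq:ERFDmapCostDecrease} applied to the $\erfd$ call at $X$ delivers sufficient descent through $\tilde{X}$. If $\rank X=r$, the rank-reduction branch is triggered; the Eckart--Young theorem supplies $\hat{X}$ of rank exactly $r-1$ with $\|\hat{X}-X\|=\sigma_r(X)\le\varepsilon$, hence $\hat{X}\in\ball[\ushort{X},2\varepsilon]$, and the key observation again yields $\s(\hat{X})\ge C$, so~\eqref{eq:ERFDmapCostDecrease} gives $f(\tilde{X}^\mathrm{R})-f(\hat{X})\le-\delta_0$ for $\delta_0\coloneq c\kappa_1\alpha_{\min}C^2>0$, which is independent of $X$. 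The delicate point is that $f(\tilde{X}^\mathrm{R})$ is controlled only relative to $f(\hat{X})$, not to $f(X)$. I would close the gap by Lipschitzness of $f$ on $\mathcal{B}$: $|f(\hat{X})-f(X)|\le L'\sigma_r(X)\le L'\varepsilon$ with $L'\coloneq\sup_{\mathcal{B}}\|\nabla f\|<\infty$; a final shrinking of $\varepsilon$ so that $L'\varepsilon\le\delta_0/2$ then gives $f(Y)\le f(\tilde{X}^\mathrm{R})\le f(\hat{X})-\delta_0\le f(X)-\delta_0/2$. Taking $\delta(\ushort{X})$ to be the minimum of the three positive decrements produced above completes the proof.
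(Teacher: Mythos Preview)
Your proposal is correct and follows essentially the same route as the paper: the same case split on $\rank\ushort{X}$, the same uniform application of Corollary~\ref{coro:ERFDmapArmijoCondition} via a fixed enclosing ball, the same use of continuity of $\s$ on $\R_r^{m\times n}$ when $\ushort{q}=r$, and the same use of~\eqref{eq:NormProjectionTangentConeDeterminantalVariety} at rank-deficient points together with the rank-reduction branch when $\ushort{q}<r$. The only noteworthy difference is how you close the ``gluing obstacle'' $f(\hat{X})-f(X)$: the paper bounds it by continuity of $f$ at $\ushort{X}$ (choosing $\rho_0$ so that $f$ oscillates by at most $\delta(\ushort{X})$ on $\ball[\ushort{X},\rho_0]$, whence $f(\hat{X})\le f(X)+2\delta(\ushort{X})$, absorbed by a descent of $3\delta(\ushort{X})$), whereas you bound it by Lipschitz continuity of $f$ on $\mathcal{B}$ and then shrink $\varepsilon$; both devices are equivalent in strength here.
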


\begin{proof}
Let $\ushort{X} \in \R_{\le r}^{m \times n}$ be such that $\s(\ushort{X}; f, \R_{\le r}^{m \times n}) > 0$. Define $\ushort{r} \coloneq \rank \ushort{X}$. This proof uses the same arguments as that of \cite[Proposition~6.1]{OlikierAbsil2023}. It constructs $\varepsilon(\ushort{X})$ and $\delta(\ushort{X})$ based on~\eqref{eq:ERFDmapCostDecrease}. This requires a lower bound on $\s(\cdot; f, \R_{\le r}^{m \times n})$ holding in a suitable neighborhood of $\ushort{X}$. It first considers the case where $\ushort{r} = r$, in which the lower bound on $\s(\cdot; f, \R_{\le r}^{m \times n})$ follows from its continuity on $\ball(\ushort{X}, \sigma_r(\ushort{X})) \cap \R_{\le r}^{m \times n}$. Then, it focuses on the case where $\ushort{r} < r$, in which the lower bound on $\s(\cdot; f, \R_{\le r}^{m \times n})$ follows from the bounds \eqref{eq:ERFDRmapContinuityGradient} on $\nabla f$ by the second inequality of~\eqref{eq:NormProjectionTangentConeDeterminantalVariety}. If $\rank X < r$, then the second inequality of~\eqref{eq:NormProjectionTangentConeDeterminantalVariety} readily gives a lower bound on $\s(X; f, \R_{\le r}^{m \times n})$. This is not the case if $\rank X = r$, however. This is where the rank reduction mechanism comes into play. It considers a projection $\hat{X}$ of $X$ onto $\R_{r-1}^{m \times n}$, and the second inequality of~\eqref{eq:NormProjectionTangentConeDeterminantalVariety} gives a lower bound on $\s(\hat{X}; f, \R_{\le r}^{m \times n})$. The inequality \eqref{eq:SufficientDecrease} is then obtained from \eqref{eq:ERFDRmapCostContinuous}, which follows from the continuity of $f$ at $\ushort{X}$.

By the first inequality of~\eqref{eq:NormProjectionTangentConeDeterminantalVariety}, $\norm{\nabla f(\ushort{X})} \ge \s(\ushort{X}; f, \R_{\le r}^{m \times n})$. Since $\nabla f$ is continuous at $\ushort{X}$, there exists $\rho_1(\ushort{X}) \in (0, \infty)$ such that, for all $X \in \ball[\ushort{X}, \rho_1(\ushort{X})]$, $\norm{\nabla f(X)-\nabla f(\ushort{X})} \le \frac{1}{2} \norm{\nabla f(\ushort{X})}$ and hence, as $|\norm{\nabla f(X)}-\norm{\nabla f(\ushort{X})}| \le \norm{\nabla f(X)-\nabla f(\ushort{X})}$,
\begin{equation}
\label{eq:ERFDRmapContinuityGradient}
\frac{1}{2} \norm{\nabla f(\ushort{X})} \le \norm{\nabla f(X)} \le \frac{3}{2} \norm{\nabla f(\ushort{X})}.
\end{equation}
Define
\begin{align*}
\oshort{\rho}(\ushort{X}) \coloneq \rho_1(\ushort{X}) + \frac{3\oshort{\alpha}}{2\kappa_2} \norm{\nabla f(\ushort{X})},&&
\alpha_*(\ushort{X}) \coloneq \min\left\{\ushort{\alpha}, \frac{2\beta\kappa_2(1-c)}{\lip_{\ball[\ushort{X}, \bar{\rho}(\ushort{X})]}(\nabla f)}\right\}.
\end{align*}
Then, for every $X \in \ball[\ushort{X}, \rho_1(\ushort{X})]$, the inclusion $\ball[X, \frac{\oshort{\alpha}}{\kappa_2} \norm{\nabla f(X)}] \subseteq \ball[\ushort{X}, \bar{\rho}(\ushort{X})]$ holds since, for all $Z \in \ball[X, \frac{\oshort{\alpha}}{\kappa_2} \norm{\nabla f(X)}]$,
\begin{equation*}
\norm{Z-\ushort{X}}
\le \norm{Z-X} + \norm{X-\ushort{X}}
\le \frac{\oshort{\alpha}}{\kappa_2} \norm{\nabla f(X)} + \rho_1(\ushort{X})
\le \bar{\rho}(\ushort{X}),
\end{equation*}
where the last inequality follows from the second inequality of~\eqref{eq:ERFDRmapContinuityGradient}. Thus, the closed ball $\ball[\ushort{X}, \bar{\rho}(\ushort{X})]$ satisfies the condition from Proposition~\ref{prop:ERFDmapUpperBoundCost}. Hence, for all $X \in \ball[\ushort{X}, \rho_1(\ushort{X})] \cap \R_{\le r}^{m \times n}$ and $\tilde{X} \in \hyperref[algo:ERFDmap]{\erfd}(X; f, r, \ushort{\alpha}, \oshort{\alpha}, \beta, c, \kappa_1, \kappa_2)$, Corollary~\ref{coro:ERFDmapArmijoCondition} applies with this closed ball, and \eqref{eq:ERFDmapCostDecrease} yields
\begin{equation}
\label{eq:ERFDRmapERFDmapArmijoCondition}
f(\tilde{X}) \le f(X) - c \, \kappa_1 \alpha_*(\ushort{X}) \s(X; f, \R_{\le r}^{m \times n})^2.
\end{equation}
Define
\begin{equation}
\label{eq:ERFDRmapDelta}
\delta(\ushort{X}) \coloneq \left\{\begin{array}{ll}
\dfrac{c}{4} \kappa_1 \alpha_*(\ushort{X}) \s(\ushort{X}; f, \R_{\le r}^{m \times n})^2 & \text{if } \ushort{r} = r,\\
\dfrac{c \, \kappa_1 \alpha_*(\ushort{X}) \norm{\nabla f(\ushort{X})}^2}{12(\min\{m, n\}-r+1)} & \text{if } \ushort{r} < r.
\end{array}\right.
\end{equation}

Let us consider the case where $\ushort{r} = r$. On $\ball(\ushort{X}, \sigma_r(\ushort{X})) \cap \R_{\le r}^{m \times n} = \ball(\ushort{X}, \sigma_r(\ushort{X})) \cap \R_r^{m \times n}$, $\s(\cdot; f, \R_{\le r}^{m \times n})$ coincides with the norm of the Riemannian gradient of the restriction of $f$ to the smooth manifold $\R_r^{m \times n}$, which is continuous \cite[\S 2.1]{SchneiderUschmajew2015}. Therefore, there exists $\rho_2(\ushort{X}) \in (0,\sigma_r(\ushort{X}))$ such that $\s(\ball[\ushort{X}, \rho_2(\ushort{X})] \cap \R_r^{m \times n}; f, \R_{\le r}^{m \times n}) \subseteq [\frac{1}{2}\s(\ushort{X}; f, \R_{\le r}^{m \times n}), \frac{3}{2}\s(\ushort{X}; f, \R_{\le r}^{m \times n})]$. Define
\begin{equation*}
\varepsilon(\ushort{X}) \coloneq \min\{\rho_1(\ushort{X}), \rho_2(\ushort{X})\}.
\end{equation*}
Let $X \in \ball[\ushort{X}, \varepsilon(\ushort{X})] \cap \R_{\le r}^{m \times n}$ and $Y \in \hyperref[algo:ERFDRmap]{\erfdr}(X; f, r, \ushort{\alpha}, \oshort{\alpha}, \beta, c, \kappa_1, \kappa_2, \Delta)$. There exists $\tilde{X} \in \hyperref[algo:ERFDmap]{\erfd}(X; f, r, \ushort{\alpha}, \oshort{\alpha}, \beta, c, \kappa_1, \kappa_2)$ such that $f(Y) \le f(\tilde{X})$. Therefore, \eqref{eq:SufficientDecrease} follows from \eqref{eq:ERFDRmapERFDmapArmijoCondition} and \eqref{eq:ERFDRmapDelta}, which completes the proof for the case where $\ushort{r} = r$.

Let us now consider the case where $\ushort{r} < r$.
Since $f$ is continuous at $\ushort{X}$, there exists $\rho_0(\ushort{X}) \in (0,\infty)$ such that $f(\ball[\ushort{X}, \rho_0(\ushort{X})]) \subseteq [f(\ushort{X})-\delta(\ushort{X}), f(\ushort{X})+\delta(\ushort{X})]$. Define
\begin{equation*}
\varepsilon(\ushort{X}) \coloneq \min\{\Delta, {\textstyle\frac{1}{2}} \rho_0(\ushort{X}), {\textstyle\frac{1}{2}} \rho_1(\ushort{X})\}.
\end{equation*}
Let $X \in \ball[\ushort{X}, \varepsilon(\ushort{X})] \cap \R_{\le r}^{m \times n}$ and $Y \in \hyperref[algo:ERFDRmap]{\erfdr}(X; f, r, \ushort{\alpha}, \oshort{\alpha}, \beta, c, \kappa_1, \kappa_2, \Delta)$.
Let us first consider the case where $\rank X = r$. Then, by Proposition~\ref{prop:SingularValuesLipschitz},
\begin{equation*}
0 < \sigma_r(X) = \sigma_r(X) - \sigma_r(\ushort{X}) \le \norm{X-\ushort{X}} \le \varepsilon(\ushort{X}) \le \Delta.
\end{equation*}
Thus, there exist $\hat{X} \in \proj{\R_{r-1}^{m \times n}}{X}$ and $\tilde{X}^\mathrm{R} \in \hyperref[algo:ERFDmap]{\erfd}(\hat{X}; f, r, \ushort{\alpha}, \oshort{\alpha}, \beta, c, \kappa_1, \kappa_2)$ such that $f(Y) \le f(\tilde{X}^\mathrm{R})$. Moreover, $\hat{X} \in \ball[\ushort{X}, 2\varepsilon(\ushort{X})]$ since
\begin{equation*}
\norm{\hat{X}-\ushort{X}}
\le \norm{\hat{X}-X} + \norm{X-\ushort{X}}
\le \sigma_r(X) + \varepsilon(\ushort{X})
\le 2 \varepsilon(\ushort{X})
\le \min\{\rho_0(\ushort{X}), \rho_1(\ushort{X})\}.
\end{equation*}
As $X, \hat{X} \in \ball[\ushort{X}, \rho_0(\ushort{X})]$, it holds that
\begin{equation}
\label{eq:ERFDRmapCostContinuous}
f(\hat{X}) \le f(X) + 2\delta(\ushort{X}).
\end{equation}
As $X, \hat{X} \in \ball[\ushort{X}, \rho_1(\ushort{X})] \cap \R_{\le r}^{m \times n}$, it holds that
\begin{align*}
f(Y)
&\le f(\tilde{X}^\mathrm{R})\\
&\le f(\hat{X}) - c \, \kappa_1 \alpha_*(\ushort{X}) \s(\hat{X}; f, \R_{\le r}^{m \times n})^2\\
&\le f(\hat{X}) - c \, \kappa_1 \alpha_*(\ushort{X}) \frac{\norm{\nabla f(\hat{X})}^2}{\min\{m, n\}-r+1}\\
&\le f(\hat{X}) - 3 \delta(\ushort{X})\\
&\le f(X) - \delta(\ushort{X}),
\end{align*}
where the second inequality follows from \eqref{eq:ERFDRmapERFDmapArmijoCondition}, the third from the second inequality of~\eqref{eq:NormProjectionTangentConeDeterminantalVariety} since $\rank \hat{X} = r-1$, the fourth from \eqref{eq:ERFDRmapContinuityGradient} and \eqref{eq:ERFDRmapDelta}, and the fifth from \eqref{eq:ERFDRmapCostContinuous}.
Let us now consider the case where $\rank X < r$. There exists $\tilde{X} \in \hyperref[algo:ERFDmap]{\erfd}(X; f, r, \ushort{\alpha}, \oshort{\alpha}, \beta, c, \kappa_1, \kappa_2)$ such that $f(Y) \le f(\tilde{X})$. Therefore,
\begin{align*}
f(Y)
&\le f(\tilde{X})\\
&\le f(X) - c \, \kappa_1 \alpha_*(\ushort{X}) \s(X; f, \R_{\le r}^{m \times n})^2\\
&\le f(X) - c \, \kappa_1 \alpha_*(\ushort{X}) \frac{r-\rank X}{\min\{m, n\}-\rank X} \norm{\nabla f(X)}^2\\
&\le f(X) - c \, \kappa_1 \alpha_*(\ushort{X}) \frac{\norm{\nabla f(X)}^2}{\min\{m, n\}-r+1}\\
&\le f(X) - 3 \delta(\ushort{X})\\
&\le f(X) - \delta(\ushort{X}),
\end{align*}
where the second inequality follows from \eqref{eq:ERFDRmapERFDmapArmijoCondition}, the third from the second inequality of \eqref{eq:NormProjectionTangentConeDeterminantalVariety}, and the fifth from \eqref{eq:ERFDRmapContinuityGradient} and \eqref{eq:ERFDRmapDelta}.
\end{proof}

\begin{theorem}
\label{thm:ERFDR}
Consider a sequence generated by $\erfdr$ (Algorithm~\ref{algo:ERFDR}). If this sequence is finite, then its last element is B-stationary for~\eqref{eq:OptiDeterminantalVariety} in the sense of Definition~\ref{def:B-Stationarity}. If it is infinite, then all of its accumulation points, if any, are B-stationary for~\eqref{eq:OptiDeterminantalVariety}.
\end{theorem}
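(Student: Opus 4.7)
The plan is to reduce everything to Lemma~\ref{lemma:ERFDRmap}, which already packages the hard work, and then run a standard monotone-descent argument by splitting into the finite and infinite cases. The finite case is immediate: the while loop of Algorithm~\ref{algo:ERFDR} stops exactly when $\s(X_i; f, \R_{\le r}^{m \times n}) = 0$, and by the characterization recorded right after~\eqref{eq:NormProjectionNegativeGradientOntoTangentConeDeterminantalVariety} this is equivalent to $X_i$ being B-stationary for~\eqref{eq:OptiDeterminantalVariety}.

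For the infinite case, I would fix an accumulation point $\ushort{X}$ and argue by contradiction, assuming $\s(\ushort{X}; f, \R_{\le r}^{m \times n}) > 0$. First, I would verify that $(f(X_i))_{i \in \N}$ is non-increasing: any search direction $G$ produced inside $\erfd$ satisfies $\ip{\nabla f(X)}{G} \le 0$ by~\eqref{eq:ERFDmapSearchDirection}, so the Armijo condition~\eqref{eq:ERFDmapArmijoCondition} from Corollary~\ref{coro:ERFDmapArmijoCondition} gives $f(\tilde{X}) \le f(X)$, and the $\erfdr$ map returns a point $Y$ with $f(Y) \le f(\tilde{X})$ whether or not the rank-reduction branch is triggered (by the $\argmin$ selection in line~5 of Algorithm~\ref{algo:ERFDRmap}). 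Extracting a subsequence $X_{i_k} \to \ushort{X}$ and using continuity of $f$, the monotone sequence $(f(X_i))_{i \in \N}$ must converge to $f(\ushort{X})$, so $f(X_i) - f(X_{i+1}) \to 0$.

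To close the loop, I would apply Lemma~\ref{lemma:ERFDRmap} to obtain $\varepsilon(\ushort{X}), \delta(\ushort{X}) > 0$ such that $f(X) - f(Y) \ge \delta(\ushort{X})$ whenever $X \in \ball[\ushort{X}, \varepsilon(\ushort{X})] \cap \R_{\le r}^{m \times n}$ and $Y$ is the output of the $\erfdr$ map applied to $X$ (with any threshold $\Delta_i \ge \Delta$). For $k$ large enough, $X_{i_k}$ lies in this ball, so $f(X_{i_k}) - f(X_{i_k+1}) \ge \delta(\ushort{X}) > 0$, which contradicts $f(X_i) - f(X_{i+1}) \to 0$. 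The main obstacle is really just the sufficient-descent property itself, which has already been established in Lemma~\ref{lemma:ERFDRmap}; everything else is the routine Zangwill-style bookkeeping packaged in \cite[\S 3]{OlikierGallivanAbsil2024}.
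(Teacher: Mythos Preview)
Your proposal is correct and follows essentially the same approach as the paper: the paper's proof is a one-line invocation of \cite[Proposition~3.2]{OlikierGallivanAbsil2024} together with Lemma~\ref{lemma:ERFDRmap}, and you have simply unpacked the Zangwill-style monotone-descent argument that Proposition~3.2 encapsulates, exactly as you acknowledge in your final sentence. Your handling of the varying thresholds $\Delta_i \ge \Delta$ is also fine, since the proof of Lemma~\ref{lemma:ERFDRmap} uses only $\sigma_r(X) \le \varepsilon(\ushort{X}) \le \Delta$ to trigger the rank-reduction branch, and this remains triggered for any larger $\Delta_i$.
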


\begin{proof}
This follows from \cite[Proposition~3.2]{OlikierGallivanAbsil2024} and Lemma~\ref{lemma:ERFDRmap}.
\end{proof}

Theorem~\ref{thm:ERFDRconvergenceRate} offers a lower bound on the rate at which $\s(\cdot; f, \R_{\le r}^{m \times n})$ converges to zero along all bounded subsequences of every sequence generated by $\erfdr$. Importantly, Theorem~\ref{thm:ERFDR} is not a consequence of Theorem~\ref{thm:ERFDRconvergenceRate} since, as indicated in Section~\ref{subsec:ElementsVariationalGeometry}, the function $\s(\cdot; f, \R_{\le r}^{m \times n})$ can fail to be lower semicontinuous at every point of $\R_{< r}^{m \times n}$. The methods from \cite{OlikierUschmajewVandereycken} enjoy the same lower bound, but are not guaranteed to accumulate at B-stationary points of~\eqref{eq:OptiDeterminantalVariety}. Riemannian gradient descent~\cite[\S 2]{BoumalAbsilCartis} also enjoys this lower bound, but this fact does not imply Theorem~\ref{thm:ERFDRconvergenceRate}: $\R_{\le r}^{m \times n}$ is not a manifold and, while $\R_r^{m \times n}$ is a manifold, it is such that~\cite[Theorem~2.11]{BoumalAbsilCartis} is vacuous due to $\varrho = 0$.

\begin{theorem}
\label{thm:ERFDRconvergenceRate}
Let $(X_i)_{i \in \N}$ be a sequence generated by $\erfdr$ (Algorithm~\ref{algo:ERFDR}).
If a subsequence $(X_{i_k})_{k \in \N}$ is bounded, then, for all $k \in \N$,
\begin{equation}
\label{eq:ERFDRconvergenceRateSubsequence}
\sum_{l=0}^k \s(X_{i_l}; f, \R_{\le r}^{m \times n})^2 < \frac{f(X_0)-\inf_{l \in \N} f(X_{i_l})}{c \, \kappa_1 \min\left\{\ushort{\alpha}, \frac{2\beta\kappa_2(1-c)}{\lip_{\ball[X_0, \rho]}(\nabla f)}\right\}},
\end{equation}
where
\begin{equation}
\label{eq:ERFDRconvergenceRateSubsequenceBallLipschitz}
\rho \coloneq \sup_{l \in \N} \left(\norm{X_{i_l}-X_0} + \frac{\oshort{\alpha}}{\kappa_2} \norm{\nabla f(X_{i_l})}\right) < \infty,
\end{equation}
thus $\lim_{l \to \infty} \s(X_{i_l}; f, \R_{\le r}^{m \times n}) = 0$ and
\begin{equation}
\label{eq:ERFDRconvergenceRateSubsequenceBis}
\min_{l \in \{0, \dots, k\}} \s(X_{i_l}; f, \R_{\le r}^{m \times n}) < \frac{1}{\sqrt{k+1}} \sqrt{\frac{f(X_0)-\inf_{l \in \N} f(X_{i_l})}{c \, \kappa_1 \min\left\{\ushort{\alpha}, \frac{2\beta\kappa_2(1-c)}{\lip_{\ball[X_0, \rho]}(\nabla f)}\right\}}}.
\end{equation}
If, moreover, $(X_{i_k})_{k \in \N}$ converges to $X \in \R_{\le r}^{m \times n}$, then $\inf_{l \in \N} f(X_{i_l}) = f(X)$.
If $(X_i)_{i \in \N}$ is bounded, which is the case if the sublevel set $\{X \in \R_{\le r}^{m \times n} \mid f(X) \le f(X_0)\}$ is bounded, then all accumulation points have the same image by $f$ and, for all $\varepsilon \in (0, \infty)$,
\begin{equation}
\label{eq:ERFDRepsilonBstationaryPoint}
i_\varepsilon
\coloneq \min\{i \in \N \mid \s(X_i; f, \R_{\le r}^{m \times n}) \le \varepsilon\}
\le \left\lceil\frac{f(X_0)-\inf_{j \in \N}f(X_j)}{\varepsilon^2 c \, \kappa_1 \min\left\{\ushort{\alpha}, \frac{2\beta\kappa_2(1-c)}{\lip_{\ball[X_0, \rho]}(\nabla f)}\right\}}\right\rceil - 1.
\end{equation}
\end{theorem}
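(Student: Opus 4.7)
The backbone of the proof is the per-iteration cost decrease from Corollary~\ref{coro:ERFDmapArmijoCondition}, applied with a single closed ball $\mathcal{B} \coloneq \ball[X_0, \rho]$ large enough to contain every line segment the $\erfd$ subroutine traverses along the bounded subsequence. This yields a uniform (in $l$) lower bound on the step size, which combines with the monotonicity of $(f(X_j))_{j \in \N}$ along the full $\erfdr$ sequence to produce the announced telescoping bound.

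First I would verify that $\rho$ is finite. Boundedness of $(X_{i_l})_{l \in \N}$ makes its closure compact, and continuity of $\nabla f$ then bounds $\sup_l \norm{\nabla f(X_{i_l})}$, so both contributions to $\rho$ in~\eqref{eq:ERFDRconvergenceRateSubsequenceBallLipschitz} are finite. By the very definition of $\rho$, the closed ball $\mathcal{B}$ contains $\ball[X_{i_l}, \frac{\oshort{\alpha}}{\kappa_2} \norm{\nabla f(X_{i_l})}]$ for every $l \in \N$; since $\mathcal{B} \subsetneq \R^{m \times n}$, the constant $\lip_{\mathcal{B}}(\nabla f)$ is finite, and $\mathcal{B}$ serves uniformly as the closed ball required by Proposition~\ref{prop:ERFDmapUpperBoundCost} at every $X_{i_l}$. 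Writing $\alpha_* \coloneq \min\{\ushort{\alpha}, \frac{2\beta\kappa_2(1-c)}{\lip_{\mathcal{B}}(\nabla f)}\}$, Corollary~\ref{coro:ERFDmapArmijoCondition} applied to the $\erfd$ call at $X_{i_l}$ in line~\ref{algo:ERFDRmap:ERFD} yields a point $\tilde{X}$ with $f(\tilde{X}) \le f(X_{i_l}) - c \kappa_1 \alpha_* \s(X_{i_l}; f, \R_{\le r}^{m \times n})^2$; since the $\erfdr$ map returns a minimizer of $f$ over $\{\tilde{X}, \tilde{X}^{\mathrm{R}}\}$ when the rank-reduction branch fires and returns $\tilde{X}$ otherwise, the same inequality holds with $X_{i_l+1}$ in place of $\tilde{X}$.

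Next I would telescope along the subsequence. Monotonicity of $(f(X_j))_{j \in \N}$ from the Armijo condition together with $i_{l+1} \ge i_l + 1$ gives $f(X_{i_l+1}) \ge f(X_{i_{l+1}}) \ge \inf_{l' \in \N} f(X_{i_{l'}})$, so summing the per-step decrease from $l=0$ to $k$ collapses the right-hand side to $f(X_{i_0}) - f(X_{i_k+1}) \le f(X_0) - \inf_l f(X_{i_l})$, establishing~\eqref{eq:ERFDRconvergenceRateSubsequence}; strictness holds because the infimum is not attained at any finite index along an infinite, strictly $f$-decreasing sequence. Summability of $\s(X_{i_l}; f, \R_{\le r}^{m \times n})^2$ over $l \in \N$ then forces $\s(X_{i_l}; f, \R_{\le r}^{m \times n}) \to 0$, and the elementary bound $\min_{l \le k} a_l^2 \le (k+1)^{-1} \sum_{l=0}^k a_l^2$ yields~\eqref{eq:ERFDRconvergenceRateSubsequenceBis}.

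The remaining assertions follow quickly. If $(X_{i_k}) \to X$, continuity of $f$ gives $\lim_l f(X_{i_l}) = f(X)$, which equals $\inf_l f(X_{i_l})$ by monotonicity of the subsequence's $f$-values. Applying this observation to every convergent subsequence of a bounded $(X_i)$ shows that all accumulation points share the common value $\lim_i f(X_i)$; specializing~\eqref{eq:ERFDRconvergenceRateSubsequenceBis} to $i_l \coloneq l$ and unwinding the definition of $i_\varepsilon$ (namely $\s(X_j; f, \R_{\le r}^{m\times n}) > \varepsilon$ for every $j < i_\varepsilon$) yields~\eqref{eq:ERFDRepsilonBstationaryPoint}. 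Finally, boundedness of the sublevel set $\{X \in \R_{\le r}^{m \times n} \mid f(X) \le f(X_0)\}$ forces $(X_i)$ into this set by monotonicity, hence bounded. The main obstacle is securing a \emph{uniform} Lipschitz constant valid for every $l$, which is precisely the role of the global ball $\ball[X_0, \rho]$; once that is in place, the rest is a standard sufficient-descent accounting in the spirit of~\cite[\S 6]{OlikierAbsil2023}.
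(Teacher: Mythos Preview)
Your proposal is correct and follows essentially the same route as the paper's proof: fix the single ball $\ball[X_0,\rho]$, invoke Corollary~\ref{coro:ERFDmapArmijoCondition} to get the uniform per-step decrease $f(X_{i_l+1})\le f(X_{i_l})-c\kappa_1\alpha_*\,\s(X_{i_l};f,\R_{\le r}^{m\times n})^2$, telescope via $f(X_{i_{l+1}})\le f(X_{i_l+1})$, and then read off~\eqref{eq:ERFDRconvergenceRateSubsequence}--\eqref{eq:ERFDRepsilonBstationaryPoint}. The only cosmetic differences are that the paper cites \cite[Corollary~6.3]{OlikierGallivanAbsil2024} for the common $f$-value of accumulation points whereas you argue it directly, and your telescoped bound should read $f(X_{i_0})-f(X_{i_{k+1}})$ rather than $f(X_{i_0})-f(X_{i_k+1})$ (harmless, since both are at most $f(X_0)-\inf_l f(X_{i_l})$).
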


\begin{proof}
The statement about the image of the accumulation points is established in the proof of \cite[Corollary~6.3]{OlikierGallivanAbsil2024}.
The proof of the other statements follows that of \cite[(1.2.22)]{Nesterov2018}. Observe that $\norm{\nabla f(X_i)} > 0$ for all $i \in \N$.
Assume that a subsequence $(X_{i_k})_{k \in \N}$ is bounded. Let $\rho$ be as in \eqref{eq:ERFDRconvergenceRateSubsequenceBallLipschitz}. Then, for all $l \in \N$, it holds that $\ball[X_{i_l}, \frac{\oshort{\alpha}}{\kappa_2} \norm{\nabla f(X_{i_l})}] \subseteq \ball[X_0, \rho]$; indeed, for all $X \in \ball[X_{i_l}, \frac{\oshort{\alpha}}{\kappa_2} \norm{\nabla f(X_{i_l})}]$,
\begin{equation*}
\norm{X-X_0}
\le \norm{X-X_{i_l}} + \norm{X_{i_l}-X_0}
\le \frac{\oshort{\alpha}}{\kappa_2} \norm{\nabla f(X_{i_l})} + \norm{X_{i_l}-X_0}
\le \rho.
\end{equation*}
Thus, for all $l \in \N$,
\begin{align*}
f(X_{i_{l+1}})
&\le f(X_{i_l+1})\\
&\le f(\tilde{X}_{i_l})\\
&\le f(X_{i_l}) - c \, \kappa_1 \min\left\{\ushort{\alpha}, \frac{2\beta\kappa_2(1-c)}{\lip_{\ball[X_0, \rho]}(\nabla f)}\right\} \s(X_{i_l}; f, \R_{\le r}^{m \times n})^2,
\end{align*}
where the last inequality follows from \eqref{eq:ERFDmapCostDecrease}, and hence
\begin{equation*}
\s(X_{i_l}; f, \R_{\le r}^{m \times n})^2 \le \frac{f(X_{i_l})-f(X_{i_{l+1}})}{c \, \kappa_1 \min\left\{\ushort{\alpha}, \frac{2\beta\kappa_2(1-c)}{\lip_{\ball[X_0, \rho]}(\nabla f)}\right\}}.
\end{equation*}
For all $k \in \N$, summing the preceding inequality over $l \in \{0, \dots, k\}$ yields
\begin{align*}
(k+1) \min_{l \in \{0, \dots, k\}} \s(X_{i_l}; f, \R_{\le r}^{m \times n})^2
&\le \sum_{l=0}^k \s(X_{i_l}; f, \R_{\le r}^{m \times n})^2 \\
&\le \frac{f(X_{i_0})-f(X_{i_{k+1}})}{c \, \kappa_1 \min\left\{\ushort{\alpha}, \frac{2\beta\kappa_2(1-c)}{\lip_{\ball[X_0, \rho]}(\nabla f)}\right\}}\\
&< \frac{f(X_0)-\inf_{l \in \N} f(X_{i_l})}{c \, \kappa_1 \min\left\{\ushort{\alpha}, \frac{2\beta\kappa_2(1-c)}{\lip_{\ball[X_0, \rho]}(\nabla f)}\right\}},
\end{align*}
and \eqref{eq:ERFDRconvergenceRateSubsequence} and \eqref{eq:ERFDRconvergenceRateSubsequenceBis} follow. The ``moreover'' statement is clear.
Assume now that $(X_i)_{i \in \N}$ is bounded. By \eqref{eq:ERFDRconvergenceRateSubsequenceBis}, for all $\varepsilon \in (0, \infty)$ and $i \in \N$, if
\begin{equation*}
\frac{1}{\sqrt{i+1}} \sqrt{\frac{f(X_0)-\inf_{j \in \N}f(X_j)}{c \, \kappa_1 \min\left\{\ushort{\alpha}, \frac{2\beta\kappa_2(1-c)}{\lip_{\ball[X_0, \rho]}(\nabla f)}\right\}}} \le \varepsilon,
\end{equation*}
i.e.,
\begin{equation*}
i \ge \frac{f(X_0)-\inf_{j \in \N}f(X_j)}{\varepsilon^2 c \, \kappa_1 \min\left\{\ushort{\alpha}, \frac{2\beta\kappa_2(1-c)}{\lip_{\ball[X_0, \rho]}(\nabla f)}\right\}} - 1,
\end{equation*}
then there is $j \in \{0, \dots, i\}$ such that $\s(X_j; f, \R_{\le r}^{m \times n}) \le \varepsilon$, thus
\begin{equation*}
i_\varepsilon
\le j
\le \left\lceil\frac{f(X_0)-\inf_{j \in \N}f(X_j)}{\varepsilon^2 c \, \kappa_1 \min\left\{\ushort{\alpha}, \frac{2\beta\kappa_2(1-c)}{\lip_{\ball[X_0, \rho]}(\nabla f)}\right\}}\right\rceil - 1,
\end{equation*}
and \eqref{eq:ERFDRepsilonBstationaryPoint} is proven.
\end{proof}

The analysis conducted in Sections~\ref{sec:ERFDmap} and \ref{subsec:ConvergenceAnalysis} can be extended straightforwardly to the case where $f$ is only defined on an open subset of $\R^{m \times n}$ containing $\R_{\le r}^{m \times n}$.

\subsection{On the choice of $\Delta$}
\label{subsec:ChoiceDelta}
Theorem~\ref{thm:ERFDR} is valid for every sequence $(\Delta_i)_{i \in \N}$ bounded away from zero. However, $(\Delta_i)_{i \in \N}$ influences the sequence generated by $\erfdr$ since it governs the sensitivity of the rank reduction mechanism. Every action of the rank reduction mechanism increases both the exploration of the feasible set and the computational cost. The parameter $\Delta_i$ enables to tune the tradeoff between exploration and low computational cost at iteration~$i$: the smaller $\Delta_i$ the less likely the rank reduction mechanism takes action.

\section{A rank-increasing algorithm}
\label{sec:RankIncreasingAlgorithm}
As mentioned in Section~\ref{sec:Introduction}, optimization problems on the determinantal variety $\R_{\le r}^{m \times n}$ appear in several applications. In practice, the parameter $r$ influences both the computational work required by an optimization algorithm to produce a solution and the quality of the solution. However, the most suitable $r$, or even a suitable $r$, may not be known a priori. Rank-increasing and rank-adaptive algorithms aim at overcoming this difficulty. For example, the rank-increasing method proposed in \cite[Algorithm~1]{UschmajewVandereycken2014} starts with a low value of~$r$, which is increased progressively by a given constant. This allows the algorithm to avoid considering unnecessarily large ranks. Rank-adaptive algorithms are more sophisticated. For example, the Riemannian rank-adaptive method defined in \cite[Algorithm~3]{ZhouEtAl2016} increases or decreases the rank by an adaptively chosen amount as the iteration proceeds, offering a tunable tradeoff between the computational cost of producing a solution and the quality of the solution. $\ppgdr$ \cite[Algorithm~6.2]{OlikierGallivanAbsil2024} and $\erfdr$ (Algorithm~\ref{algo:ERFDR}) can be used to design rank-increasing schemes. An example based on $\erfdr$ is given as Algorithm~\ref{algo:RankIncreasingERFDR}.

\begin{algorithm}[H]
\caption{A rank-increasing algorithm based on $\erfdr$}
\label{algo:RankIncreasingERFDR}
\begin{algorithmic}[1]
\Require
$(f, r, \ushort{\alpha}, \oshort{\alpha}, \beta, c, \kappa_1, \kappa_2, \Delta, \tau, \varepsilon)$ where $f : \R^{m \times n} \to \R$ is differentiable with $\nabla f$ locally Lipschitz continuous, $r < \min\{m, n\}$ is a positive integer, $0 < \ushort{\alpha} \le \oshort{\alpha} < \infty$, $\beta, c, \tau \in (0,1)$, $\kappa_1 \in (0, \frac{1}{2}]$, $\kappa_2 \in (0, 1]$, and $\Delta, \varepsilon \in (0,\infty)$. 
\Input
$(X_0, r_0)$ such that $X_0 \in \R_{\le r_0}^{m \times n}$, $r_0 \in \{1, \dots, r\}$, and $\{X \in \R_{\le r}^{m \times n} \mid f(X) \le f(X_0)\}$ is bounded.
\Output
$((X_i)_{i=0}^{i=i_*}, (r_i)_{i=0}^{i=i_*})$ such that $i_* \in \N \cup \{\infty\}$ and, for all $i \in \{1, \dots, i_*\}$, $X_i \in \R_{\le r_{i-1}}^{m \times n}$, $r_i \in \{r_{i-1}, \dots, r\}$, and $\s(X_i; f, \R_{\le r_{i-1}}^{m \times n}) \le \tau^{i-1} \varepsilon$.

\State
$i \gets 0$;
\While
{$\s(X_i; f, \R_{\le r_i}^{m \times n}) > 0$}
\State
Iterate $\hyperref[algo:ERFDR]{\erfdr}$ on $(X_i; f, r_i, \ushort{\alpha}, \oshort{\alpha}, \beta, c, \kappa_1, \kappa_2, \Delta)$ at least once to compute $X_{i+1} \in \R_{\le r_i}^{m \times n}$ such that $\s(X_{i+1}; f, \R_{\le r_i}^{m \times n}) \le \tau^i\varepsilon$;
\label{algo:RankIncreasingERFDR:ERFDR}
\State
Choose $r_{i+1} \in \{r_i, \dots, r\}$;
\label{algo:RankIncreasingERFDR:RankIncrease}
\State
$i \gets i+1$;
\EndWhile
\State
$i_* \gets i$;
\end{algorithmic}
\end{algorithm}

By Theorems~\ref{thm:ERFDR} and~\ref{thm:ERFDRconvergenceRate}, line~\ref{algo:RankIncreasingERFDR:ERFDR} terminates; \eqref{eq:ERFDRepsilonBstationaryPoint} even provides an upper bound on the number of $\erfdr$ iterations required.
There is no constraint on the choice to be made in line~\ref{algo:RankIncreasingERFDR:RankIncrease}; the choice can be made, e.g., based on information depending on the application or by using a rank-increasing mechanism such as that in \cite[Algorithm~3]{ZhouEtAl2016}.

If $i_* < \infty$, then $\s(X_{i_*}; f, \R_{\le r_{i_*}}^{m \times n}) = 0$. Otherwise, since $(r_i)_{i \in \N}$ is in $\N$, monotonically nondecreasing, and upper bounded by $r$, there exists $\ushort{r} \in \{r_0, \dots, r\}$ such that $r_i = \ushort{r}$ for all $i \in \N$ large enough. Thus, Corollary~\ref{coro:RankIncreasingERFDR} readily follows from Theorems~\ref{thm:ERFDR} and \ref{thm:ERFDRconvergenceRate}.

\begin{corollary}
\label{coro:RankIncreasingERFDR}
Let $((X_i)_{i=0}^{i=i_*}, (r_i)_{i=0}^{i=i_*})$ be generated by Algorithm~\ref{algo:RankIncreasingERFDR}. If $i_* < \infty$, then $X_{i_*}$ is B-stationary on $\R_{\le r_{i_*}}^{m \times n}$, i.e., $\s(X_{i_*}; f, \R_{\le r_{i_*}}^{m \times n}) = 0$. Otherwise, letting $\ushort{r}$ denote the limit of $(r_i)_{i \in \N}$:
\begin{enumerate}
\item each of the accumulation points of $(X_i)_{i \in \N}$, of which there exists at least one, is B-stationary on $\R_{\le \ushort{r}}^{m \times n}$, i.e., is a zero of $\s(\cdot; f, \R_{\le \ushort{r}}^{m \times n})$;
\item $\lim_{i \to \infty} \s(X_i; f, \R_{\le \ushort{r}}^{m \times n}) = 0$;
\item all accumulation points of $(X_i)_{i \in \N}$ have the same image by $f$.
\end{enumerate}
\end{corollary}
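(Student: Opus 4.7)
The plan is to reduce everything to the two convergence results already proved for $\erfdr$. The case $i_* < \infty$ is immediate: the outer while loop exits only when $\s(X_{i_*}; f, \R_{\le r_{i_*}}^{m \times n}) = 0$, which by Definition~\ref{def:B-Stationarity} is exactly the claimed B-stationarity on $\R_{\le r_{i_*}}^{m \times n}$.

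For the case $i_* = \infty$, the paragraph preceding the corollary produces $i_0 \in \N$ with $r_i = \ushort{r}$ for every $i \ge i_0$. From iteration $i_0$ onwards, line~\ref{algo:RankIncreasingERFDR:ERFDR} runs finitely many $\erfdr$ iterations on $\R_{\le \ushort{r}}^{m \times n}$ before storing $X_{i+1}$; I would concatenate all these inner iterations into a single sequence $(Y_j)_{j \in \N}$ produced by $\erfdr$ (Algorithm~\ref{algo:ERFDR}) on $\R_{\le \ushort{r}}^{m \times n}$ starting from $Y_0 = X_{i_0}$, observing that $(X_i)_{i \ge i_0}$ is a subsequence of $(Y_j)$. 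Because $f$ is strictly decreasing along any $\erfdr$ sequence, $f(Y_j) \le f(X_{i_0}) \le f(X_0)$ for all $j$, so $(Y_j)$ lies in the sublevel set $\{X \in \R_{\le r}^{m \times n} \mid f(X) \le f(X_0)\}$, which is bounded by hypothesis; in particular $(X_i)_{i \in \N}$ is bounded and has at least one accumulation point.

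Items~(1)--(3) then follow quickly. For (1), Theorem~\ref{thm:ERFDR} applied to $(Y_j)$ implies that every accumulation point of $(Y_j)$ is B-stationary on $\R_{\le \ushort{r}}^{m \times n}$; since only finitely many $X_i$ precede $X_{i_0}$, every accumulation point of $(X_i)_{i \in \N}$ is an accumulation point of $(Y_j)$, hence B-stationary on $\R_{\le \ushort{r}}^{m \times n}$. Item~(2) is a direct consequence of the stopping criterion built into line~\ref{algo:RankIncreasingERFDR:ERFDR}, which enforces $\s(X_{i+1}; f, \R_{\le \ushort{r}}^{m \times n}) \le \tau^i \varepsilon \to 0$ for all $i \ge i_0$. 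Item~(3) follows from the last statement of Theorem~\ref{thm:ERFDRconvergenceRate} applied to the bounded sequence $(Y_j)$: all its accumulation points share the same value of $f$, and those of $(X_i)_{i \in \N}$ are a subset of these. The only mild bookkeeping point, which I expect to be the main obstacle to state cleanly, is verifying that the concatenation is a legitimate $\erfdr$ sequence; this works because $r_i = \ushort{r}$ is truly constant for $i \ge i_0$ and Algorithm~\ref{algo:ERFDR} already tolerates the per-iteration freedom in the choice of $\Delta_i$.
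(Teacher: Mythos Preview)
Your proof is correct and follows the same approach the paper sketches: the paper simply states that the corollary ``readily follows from Theorems~\ref{thm:ERFDR} and~\ref{thm:ERFDRconvergenceRate}'' after noting that $(r_i)_{i\in\N}$ eventually stabilizes, and your concatenation argument is a sound way to make that reduction explicit. The only remark is that your derivation of item~(2) directly from the built-in tolerance $\tau^i\varepsilon$ is even more elementary than invoking Theorem~\ref{thm:ERFDRconvergenceRate}, which is fine.
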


Corollary~\ref{coro:RankIncreasingERFDR} is stronger than \cite[Theorem~2]{ZhouEtAl2016}. Furthermore, the latter relies on \cite[Assumption~6]{ZhouEtAl2016}, which can fail to be satisfied even by polynomial functions \cite[Proposition~7.21]{OlikierGallivanAbsil2023}.

$\erfdr$ can be replaced with $\ppgdr$ in Algorithm~\ref{algo:RankIncreasingERFDR}. By \cite[Theorem~6.2 and Corollary~6.3]{OlikierGallivanAbsil2024}, a result analogous to Corollary~\ref{coro:RankIncreasingERFDR} holds.

\section{The $\crfd$ map and the $\crfdr$ algorithm}
\label{sec:CRFDmapCRFDR}
The $\crfd$ map is introduced in Definition~\ref{def:CRFD(R)map} as a subclass of the $\erfd$ map (Algorithm~\ref{algo:ERFDmap}) based on Propositions~\ref{prop:ProjectionOntoClosedConeMinUnitSphere} and \ref{prop:CRFD}. It exploits the idea raised at the end of Section~\ref{subsec:ElementsVariationalGeometry} of finding a computationally cheaper surrogate for~\eqref{eq:ProjectionRestrictedTangentConeDeterminantalVariety} when the rank of the input is smaller than $r$. The $\crfdr$ map, which is the iteration map of $\crfdr$ and uses the $\crfd$ map as a subroutine, is also introduced in Definition~\ref{def:CRFD(R)map}.

\begin{proposition}
\label{prop:ProjectionOntoClosedConeMinUnitSphere}
For every closed cone $\mathcal{C} \subseteq \R^{m \times n}$,
\begin{equation*}
\inf_{X \in \R^{m \times n} \setminus \{0_{m \times n}\}} \frac{\norm{\proj{\mathcal{C}}{X}}}{\norm{X}}
= \min_{X \in \partial \ball(0_{m \times n}, 1)} \norm{\proj{\mathcal{C}}{X}},
\end{equation*}
and this quantity is positive if and only if $\mathcal{C}^* = \{0_{m \times n}\}$.
\end{proposition}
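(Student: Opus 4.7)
The plan is to reduce the infimum over $\R^{m \times n} \setminus \{0_{m \times n}\}$ to a minimum over the unit sphere by exploiting positive homogeneity, then to characterize positivity via Proposition~\ref{prop:ProjectionOntoClosedCone}.

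First I would verify that the function $\varphi : \R^{m \times n} \to \R : X \mapsto \norm{\proj{\mathcal{C}}{X}}$ is well defined and continuous. Since $\mathcal{C}$ is closed, $\proj{\mathcal{C}}{X}$ is nonempty and, by Proposition~\ref{prop:ProjectionOntoClosedCone}, every element of $\proj{\mathcal{C}}{X}$ has squared norm $\norm{X}^2 - \dist(X,\mathcal{C})^2$, so $\varphi(X)^2 = \norm{X}^2 - \dist(X,\mathcal{C})^2$; the right-hand side is continuous in $X$ because $\dist(\cdot,\mathcal{C})$ is $1$-Lipschitz continuous, and thus $\varphi$ is continuous.

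Next I would prove that $\varphi$ is positively homogeneous of degree one. For $X \in \R^{m \times n}$, $t \in (0,\infty)$, and any $Y \in \proj{\mathcal{C}}{X}$, the matrix $tY$ lies in $\mathcal{C}$ by the cone property, and for every $W \in \mathcal{C}$ one has $W/t \in \mathcal{C}$, whence $\norm{tX - W} = t\norm{X - W/t} \ge t\norm{X - Y} = \norm{tX - tY}$; therefore $tY \in \proj{\mathcal{C}}{tX}$ and $\varphi(tX) = t\varphi(X)$. Consequently, for every $X \neq 0_{m \times n}$, $\varphi(X)/\norm{X} = \varphi(X/\norm{X})$, so
\begin{equation*}
\inf_{X \in \R^{m \times n} \setminus \{0_{m \times n}\}} \frac{\varphi(X)}{\norm{X}} = \inf_{X \in \partial \ball(0_{m \times n},1)} \varphi(X),
\end{equation*}
and the infimum on the right is attained because $\partial \ball(0_{m \times n},1)$ is compact and $\varphi$ is continuous, giving the claimed minimum.

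Finally, for the characterization, note that $\varphi(X) = 0$ is equivalent to $0_{m \times n} \in \proj{\mathcal{C}}{X}$ (all elements share the same norm), which by Proposition~\ref{prop:ProjectionOntoClosedCone} is equivalent to $\proj{\mathcal{C}}{X} = \{0_{m \times n}\}$, i.e., to $X \in \mathcal{C}^*$. Hence the minimum over $\partial \ball(0_{m \times n},1)$ vanishes if and only if $\mathcal{C}^*$ contains a unit-norm element, and by the cone property of $\mathcal{C}^*$ this happens if and only if $\mathcal{C}^* \neq \{0_{m \times n}\}$. The argument is quite short; the only subtlety is ensuring that $\proj{\mathcal{C}}{X}$, though possibly set-valued, leads to an unambiguously defined continuous scalar $\varphi(X)$, which is precisely what Proposition~\ref{prop:ProjectionOntoClosedCone} guarantees.
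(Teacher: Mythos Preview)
Your proof is correct and follows essentially the same approach as the paper: reduce to the unit sphere via positive homogeneity, invoke compactness for the minimum, and use Proposition~\ref{prop:ProjectionOntoClosedCone} for the polar characterization. The only cosmetic difference is that the paper establishes homogeneity through the identity $\dist(\lambda X,\mathcal{C})=\lambda\,\dist(X,\mathcal{C})$ and then the formula $\norm{\proj{\mathcal{C}}{X}}^2=\norm{X}^2-\dist(X,\mathcal{C})^2$, whereas you verify directly that $t\,\proj{\mathcal{C}}{X}\subseteq\proj{\mathcal{C}}{tX}$; you are also a bit more explicit about continuity, which the paper leaves implicit in its appeal to the extreme value theorem.
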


\begin{proof}
Let $\mathcal{C} \subseteq \R^{m \times n}$ be a closed cone.
For all $X \in \R^{m \times n} \setminus \{0_{m \times n}\}$ and $\lambda \in (0, \infty)$,
\begin{equation*}
\dist(\lambda X, \mathcal{C})
= \inf_{Y \in \mathcal{C}} \norm{\lambda X-Y}
= \lambda \inf_{Y \in \mathcal{C}} \norm{X-\frac{Y}{\lambda}}
= \lambda \inf_{Y \in \mathcal{C}} \norm{X-Y}
= \lambda \, \dist(X, \mathcal{C}).
\end{equation*}
Thus, by Proposition~\ref{prop:ProjectionOntoClosedCone}, for all $X \in \R^{m \times n} \setminus \{0_{m \times n}\}$,
\begin{align*}
\frac{\norm{\proj{\mathcal{C}}{X}}}{\norm{X}}
&= \frac{\sqrt{\norm{X}^2-\dist(X, \mathcal{C})^2}}{\norm{X}}\\
&= \sqrt{1-\left(\frac{\dist(X, \mathcal{C})}{\norm{X}}\right)^2}\\
&= \sqrt{1-\dist(\frac{X}{\norm{X}}, \mathcal{C})^2}\\
&= \norm{\proj{\mathcal{C}}{\frac{X}{\norm{X}}}}.
\end{align*}
Hence,
\begin{equation*}
\inf_{X \in \R^{m \times n} \setminus \{0_{m \times n}\}} \frac{\norm{\proj{\mathcal{C}}{X}}}{\norm{X}} = \inf_{X \in \partial \ball(0_{m \times n}, 1)} \norm{\proj{\mathcal{C}}{X}}.
\end{equation*}
The equality follows from the extreme value theorem. The ``if and only if'' statement follows from Proposition~\ref{prop:ProjectionOntoClosedCone}.
\end{proof}

\begin{proposition}
\label{prop:CRFD}
Let $X \in \R_{< r}^{m \times n}$ and $\ushort{r} \coloneq \rank X$. Let $\mathcal{C} \subseteq \R_{\le r-\ushort{r}}^{m \times n}$ be a closed cone such that $\mathcal{C}^* = \{0_{m \times n}\}$. Then, every $G \in \proj{\mathcal{C}}{-\nabla f(X)}$ satisfies \eqref{eq:ERFDmapSearchDirection} with $\kappa_1 = \min\limits_{Y \in \partial \ball(0_{m \times n}, 1)} \norm{\proj{\mathcal{C}}{Y}}^2$ and $\kappa_2 = 1$ and $X+\alpha G \in \R_{\le r}^{m \times n}$ for all $\alpha \in (0, \infty)$.
\end{proposition}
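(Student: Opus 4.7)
The plan is to separate the two assertions of the proposition. The feasibility assertion $X + \alpha G \in \R_{\le r}^{m \times n}$ for all $\alpha \in (0, \infty)$ is immediate from rank subadditivity: since $G \in \mathcal{C} \subseteq \R_{\le r-\ushort{r}}^{m \times n}$, we have $\rank(\alpha G) \le r-\ushort{r}$, hence $\rank(X+\alpha G) \le \ushort{r} + (r-\ushort{r}) = r$. The remaining work is to establish the two bounds making up \eqref{eq:ERFDmapSearchDirection}.

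For the $\kappa_2 = 1$ bound, I would invoke Proposition~\ref{prop:ProjectionOntoClosedCone} with the closed cone $\mathcal{C}$ and the point $-\nabla f(X)$: since $G \in \proj{\mathcal{C}}{-\nabla f(X)}$, it yields the equality $\ip{G}{-\nabla f(X)} = \norm{G}^2$, which gives the $\kappa_2\norm{G}^2$ term of \eqref{eq:ERFDmapSearchDirection} at once (with room to spare). The same equality will serve as the left-hand side when chaining inequalities for the other term.

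For the $\kappa_1 \s(X; f, \R_{\le r}^{m \times n})^2$ bound, the plan is to pass through $\norm{\nabla f(X)}^2$ as an intermediate quantity, since $\mathcal{C}$ has no a priori relation to $\tancone{\R_{\le r}^{m \times n}}{X}$. Going down from $\norm{G}^2 = \ip{G}{-\nabla f(X)}$, I would apply Proposition~\ref{prop:ProjectionOntoClosedConeMinUnitSphere} to the closed cone $\mathcal{C}$: the homogeneity argument there gives $\norm{G} \ge \sqrt{\kappa_1}\norm{\nabla f(X)}$ (trivially so when $\nabla f(X) = 0_{m\times n}$, and with $\sqrt{\kappa_1} > 0$ otherwise thanks to the hypothesis $\mathcal{C}^* = \{0_{m \times n}\}$). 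Going up from $\s(X; f, \R_{\le r}^{m \times n})^2$, the first inequality of \eqref{eq:NormProjectionTangentConeDeterminantalVariety} gives $\s(X; f, \R_{\le r}^{m \times n}) \le \norm{\nabla f(X)}$. Squaring and combining yields $\ip{G}{-\nabla f(X)} = \norm{G}^2 \ge \kappa_1 \norm{\nabla f(X)}^2 \ge \kappa_1 \s(X; f, \R_{\le r}^{m \times n})^2$, which is the desired bound.

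I do not foresee a real obstacle: the two hypotheses on $\mathcal{C}$ are each used once in an essentially forced way. The hypothesis $\mathcal{C} \subseteq \R_{\le r-\ushort{r}}^{m \times n}$ is the exact ingredient needed by rank subadditivity to preserve feasibility, while the hypothesis $\mathcal{C}^* = \{0_{m \times n}\}$ is the exact condition under which Proposition~\ref{prop:ProjectionOntoClosedConeMinUnitSphere} delivers a positive constant $\kappa_1$, hence a genuine sufficient-descent direction rather than a trivial one.
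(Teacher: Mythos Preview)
Your proposal is correct and follows essentially the same route as the paper: the paper likewise obtains $\ip{G}{-\nabla f(X)} = \norm{G}^2$ from Proposition~\ref{prop:ProjectionOntoClosedCone}, then chains $\norm{G}^2 \ge \kappa_1 \norm{\nabla f(X)}^2$ via Proposition~\ref{prop:ProjectionOntoClosedConeMinUnitSphere} and $\norm{\nabla f(X)}^2 \ge \s(X; f, \R_{\le r}^{m \times n})^2$ via the first inequality of~\eqref{eq:NormProjectionTangentConeDeterminantalVariety}, and handles feasibility by rank subadditivity.
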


\begin{proof}
Let $Z \coloneq -\nabla f(X)$ and $G \in \proj{\mathcal{C}}{Z}$. By Propositions~\ref{prop:ProjectionOntoClosedCone} and \ref{prop:ProjectionOntoClosedConeMinUnitSphere} and the first inequality of~\eqref{eq:NormProjectionTangentConeDeterminantalVariety},
\begin{align*}
\ip{Z}{G}
&= \norm{G}^2\\
&= \norm{\proj{\mathcal{C}}{Z}}^2\\
&\ge \min_{Y \in \partial \ball(0_{m \times n}, 1)} \norm{\proj{\mathcal{C}}{Y}}^2 \norm{Z}^2\\
&\ge \min_{Y \in \partial \ball(0_{m \times n}, 1)} \norm{\proj{\mathcal{C}}{Y}}^2 \s(X; f, \R_{\le r}^{m \times n})^2,
\end{align*}
which establishes the first statement. The second statement holds by subadditivity of the rank.
\end{proof}

\begin{definition}
\label{def:CRFD(R)map}
The $\erfd$ map (Algorithm~\ref{algo:ERFDmap}) is called the $\crfd$ map if, given an input $X \in \R_{\le r}^{m \times n}$ of rank denoted by $\ushort{r}$, the element of $\R^{m \times n}$ to be chosen in line~\ref{algo:ERFDmap:SearchDirection} is selected in $\proj{\restancone{\R_{\le r}^{m \times n}}{X}}{-\nabla f(X)}$ if $\ushort{r} = r$ and in $\proj{\mathcal{C}}{-\nabla f(X)}$ if $\ushort{r} < r$, where $\mathcal{C} \subseteq \R_{\le r-\ushort{r}}^{m \times n}$ is a closed cone satisfying the following properties:
\begin{enumerate}
\item $\min_{Y \in \partial \ball(0_{m \times n}, 1)} \norm{\proj{\mathcal{C}}{Y}}^2 \ge \kappa_1$;
\item computing a projection onto $\mathcal{C}$ and an SVD of this projection is at most as expensive as computing the Frobenius norm of an element of $\R^{m \times n}$.
\end{enumerate}
The $\erfdr$ map is called the $\crfdr$ map if it uses the $\crfd$ map in lines~\ref{algo:ERFDRmap:ERFD} and~\ref{algo:ERFDRmap:ERFD_RankReduction}.
\end{definition}

Examples of a closed cone $\mathcal{C} \subseteq \R_{\le 1}^{m \times n}$ satisfying the requirements of Definition~\ref{def:CRFD(R)map} for all $X \in \R_{< r}^{m \times n}$ are given in Table~\ref{tab:ClosedConeExamples}.

\begin{table}[h]
\begin{center}
\begin{spacing}{1.6}
\begin{tabular}{ll}
\hline
$\mathcal{C}$ & $\min\limits_{X \in \partial \ball(0_{m \times n}, 1)} \norm{\proj{\mathcal{C}}{X}}^2$\\
\hline
all elements of $\R^{m \times n}$ with at most one nonzero entry & $1/mn$\\
\hline
all elements of $\R^{m \times n}$ with at most one nonzero row & $1/m$\\
\hline
all elements of $\R^{m \times n}$ with at most one nonzero column & $1/n$\\
\hline
\end{tabular}
\end{spacing}
\end{center}
\caption{Examples of a closed cone $\mathcal{C} \subseteq \R_{\le 1}^{m \times n}$ satisfying the requirements of Definition~\ref{def:CRFD(R)map} for all $X \in \R_{< r}^{m \times n}$.}
\label{tab:ClosedConeExamples}
\end{table}

Proposition~\ref{prop:CRFD} still holds if ``$\proj{\mathcal{C}}{-\nabla f(X)}$'' and ``$\kappa_2 = 1$'' are replaced with ``$\proj{\restancone{\R_{\ushort{r}}^{m \times n}}{X}}{-\nabla f(X)} + \proj{\mathcal{C}}{-\nabla f(X)}$'' and ``$\kappa_2 = \frac{1}{2}$'', respectively. This defines a potentially interesting alternative to the $\crfd$ map.

\section{Practical implementation and computational cost of the $\crfdr$ map}
\label{sec:PracticalImplementationComputationalCostCRFDRmap}
Following \cite[\S 7]{OlikierAbsil2023}, this section compares the computational cost of the $\crfdr$ map (Definition~\ref{def:CRFD(R)map}) with that of the $\rfdr$ map \cite[Algorithm~2]{OlikierAbsil2023} (Section~\ref{subsec:ComparisonWithRFDRmap}) and that of the $\rfd$ map \cite[Algorithm~1]{OlikierAbsil2023} (Section~\ref{subsec:ComparisonWithRFDmap}). The comparison is based on detailed implementations of these algorithms involving only evaluations of $f$ and $\nabla f$, a projection onto a closed cone $\mathcal{C}$ as in Definition~\ref{def:CRFD(R)map}, an SVD of this projection, and some operations from linear algebra:
\begin{enumerate}
\item matrix multiplication requiring at most $2mnr$ flops;
\item QR factorization with column pivoting \cite[Algorithm~5.4.1]{GolubVanLoan} of a matrix with $m$ or $n$ rows and at most $r$ columns;
\item \emph{small-scale SVD}, i.e., the matrix to decompose is $m$-by-$r$, $r$-by-$n$, or its dimensions are at most~$r$;
\item \emph{large-scale SVD}, i.e., truncated SVD of rank at most $r-\ushort{r}$ of an $m$-by-$n$ matrix whose rank can be as large as $\min\{m, n\}-\ushort{r}$, with $\ushort{r} \in \{0, \dots, r-1\}$.
\end{enumerate}
The SVD terminology is that from Section~\ref{subsec:BasicFactsAboutSingularValues}. In this list, only the (truncated) SVD relies on a convergent algorithm rather than an algorithm with finite complexity such as those for QR factorization. Furthermore, if $\mathcal{C}$ is chosen among the examples given in Table~\ref{tab:ClosedConeExamples}, then projecting onto $\mathcal{C}$ amounts to find, in an $m$-by-$n$ matrix, the entry of maximal absolute value or the row or column of maximal Frobenius norm, and an SVD of the projection is readily available.

\subsection{Detailed implementation of the $\crfdr$ map}
\label{subsec:DetailedImplementationCRFDRmap}
Detailed implementations of the $\rfdr$ and $\crfdr$ maps are given in \cite[Algorithms~8]{OlikierAbsil2023} and Algorithm~\ref{algo:DetailedCRFDRmap}, respectively.\footnote{Matlab implementations of $\rfdr$ and $\crfdr$ based on these detailed implementations are available at \url{https://github.com/golikier/BouligandStationarityLowRankOptimization}.} Algorithm~\ref{algo:DetailedCRFDRmap} uses as a subroutine Algorithm~\ref{algo:DetailedCRFDmapSVDs}, which is a detailed implementation of the $\crfd$ map. Those algorithms work with factorizations of the involved matrices as much as possible.

Algorithm~\ref{algo:DetailedCRFDmapSVDs} involves one evaluation of $\nabla f$ (in line~\ref{algo:DetailedCRFDmapSVDs:GradientEvaluation}), at most
\begin{equation}
\label{eq:MaxNumIterationsDetailedCRFDmapSVDs}
1+\max\left\{0, \left\lceil\ln\left(\frac{2(1-c)}{\alpha\lip_{\ball[X, \alpha\norm{\nabla f(X)}]}(\nabla f)}\right)/\ln(\beta)\right\rceil\right\}
\end{equation}
evaluations of $f$ (in the while loop that is executed), at most one projection onto $\mathcal{C}$ and an SVD of this projection (in line~\ref{algo:DetailedCRFDmapSVDs:ProjectionClosedCone}), zero or two QR factorization(s) with column pivoting of a matrix with $m$ or $n$ rows and at most $r$ columns (in line~\ref{algo:DetailedCRFDmapSVDs:QRfactorizations}), matrix multiplications requiring at most $2mnr$ flops \cite[Table~1.1.2]{GolubVanLoan}, and one small-scale SVD (just after the while loop that is executed). The upper bound \eqref{eq:MaxNumIterationsDetailedCRFDmapSVDs} follows from \eqref{eq:ERFDmapMaxNumIterationsWhile}.

If the input is $0_{m \times n}$, then the QR factorizations in line~\ref{algo:DetailedCRFDmapSVDs:QRfactorizations} and the small-scale SVD in line~\ref{algo:DetailedCRFDmapSVDs:NonMaximalRankSmallScaleSVD} are not needed.

The input and output of Algorithm~\ref{algo:DetailedCRFDmapSVDs} are SVDs because it is a subroutine of Algorithm~\ref{algo:DetailedCRFDRmap}, which requires an SVD as input.

\begin{algorithm}[H]
\caption{Detailed $\crfd$ map via SVDs}
\label{algo:DetailedCRFDmapSVDs}
\begin{algorithmic}[1]
\Require
$(f, r, \alpha, \beta, c, \kappa_1)$ where $f : \R^{m \times n} \to \R$ is differentiable with $\nabla f$ locally Lipschitz continuous, $r < \min\{m, n\}$ is a positive integer, $\alpha \in (0, \infty)$, $\beta, c \in (0, 1)$, and $\kappa_1 \in (0, \frac{1}{2}]$.
\Input
$(U, \Sigma, V)$ where $U \Sigma V^\tp \in \R_{\ushort{r}}^{m \times n}$ is an SVD and $\ushort{r} \in \{0, \dots, r\}$.
\Output
$(\tilde{U}, \tilde{\Sigma}, \tilde{V})$ where $\tilde{U}\tilde{\Sigma}\tilde{V}^\tp \in \R_{\tilde{r}}^{m \times n}$ is an SVD, $\tilde{r} \in \{0, \dots, r\}$, and $\tilde{U}\tilde{\Sigma}\tilde{V}^\tp \in \hyperref[algo:ERFDmap]{\erfd}(U \Sigma V^\tp; f, r, \alpha, \alpha, \beta, c, \kappa_1, 1)$.

\State
$G \gets -\nabla f(U \Sigma V^\tp)$;
\label{algo:DetailedCRFDmapSVDs:GradientEvaluation}
\If
{$\ushort{r} = r$}
\State
$G_1 \gets U^\tp G$;
$G_2 \gets G V$;
$s_1 \gets \norm{G_1}^2$;
$s_2 \gets \norm{G_2}^2$;
\label{algo:DetailedCRFDmapSVDs:MaximalRank}
\If
{$s_1 \ge s_2$}
\State
$X_2 \gets \Sigma V^\tp$;
\While
{$f(U(X_2+\alpha G_1)) > f(U \Sigma V^\tp) - c \, \alpha \, s_1$}
\Comment{$UG_1 \in \proj{\restancone{\R_{\le r}^{m \times n}}{U \Sigma V^\tp}}{G}$.}
\State
$\alpha \gets \alpha \beta$;
\EndWhile
\State
Compute an SVD $\tilde{U} \tilde{\Sigma} \tilde{V}^\tp \in \R_{\tilde{r}}^{r \times n}$ of $X_2+\alpha G_1$;
\State
$\tilde{U} \gets U \tilde{U}$;
\Else
\State
$X_1 \gets U \Sigma$;
\While
{$f((X_1+\alpha G_2)V^\tp) > f(U \Sigma V^\tp) - c \, \alpha \, s_2$}
\Comment{$G_2V^\tp \in \proj{\restancone{\R_{\le r}^{m \times n}}{U \Sigma V^\tp}}{G}$.}
\State
$\alpha \gets \alpha \beta$;
\EndWhile
\State
Compute an SVD $\tilde{U} \tilde{\Sigma} \tilde{V}^\tp \in \R_{\tilde{r}}^{m \times r}$ of $X_1+\alpha G_2$;
\State
$\tilde{V} \gets V \tilde{V}$;
\EndIf
\label{algo:DetailedCRFDmapSVDs:MaximalRankEnd}
\Else
\State
Choose, e.g., in Table~\ref{tab:ClosedConeExamples}, a closed cone $\mathcal{C} \subseteq \R_{\le r-\ushort{r}}^{m \times n}$ satisfying the requirements of Definition~\ref{def:CRFD(R)map};
\label{algo:DetailedCRFDmapSVDs:NonMaximalRank}
\State
Choose $\oshort{G} \in \proj{\mathcal{C}}{G}$ and write it via SVD as $\oshort{G} = \oshort{U} \oshort{\Sigma} \oshort{V}^\tp \in \R_{\oshort{r}}^{m \times n}$;
\Comment{Recall that $1 \le \oshort{r} \le r-\ushort{r}$.}
\label{algo:DetailedCRFDmapSVDs:ProjectionClosedCone}
\State
$s \gets \norm{\oshort{\Sigma}}^2$;
\State
Compute QR factorizations with column pivoting $[U \; \oshort{U}] = \hat{U} R_1$ and $[V \; \oshort{V}] = \hat{V} R_2$ where $\hat{U} \in \st(r_1, m)$, $\hat{V} \in \st(r_2, n)$, and $r_1, r_2 \in \{\max\{\ushort{r}, \oshort{r}\}, \dots, \ushort{r}+\oshort{r}\}$;
\label{algo:DetailedCRFDmapSVDs:QRfactorizations}
\While
{$f(\hat{U} R_1 \diag(\Sigma, \alpha\oshort{\Sigma}) R_2^\tp \hat{V}^\tp) > f(U \Sigma V^\tp) - c \, \alpha \, s$}
\State
$\alpha \gets \alpha \beta$;
\EndWhile
\State
Compute an SVD $\tilde{U} \tilde{\Sigma} \tilde{V}^\tp \in \R_{\tilde{r}}^{r_1 \times r_2}$ of $R_1 \diag(\Sigma, \alpha\oshort{\Sigma}) R_2^\tp$;
\label{algo:DetailedCRFDmapSVDs:NonMaximalRankSmallScaleSVD}
\State
$\tilde{U} \gets \hat{U} \tilde{U}$;
$\tilde{V} \gets \hat{V} \tilde{V}$;
\label{algo:DetailedCRFDmapSVDs:NonMaximalRankEnd}
\EndIf
\State
Return $(\tilde{U}, \tilde{\Sigma}, \tilde{V})$.
\end{algorithmic}
\end{algorithm}

In the worst case, i.e., if the input has rank $r$ and its smallest singular value is smaller than or equal to $\Delta$, then Algorithm~\ref{algo:DetailedCRFDRmap} calls Algorithm~\ref{algo:DetailedCRFDmapSVDs} twice. In the first call, applied to the input, lines~\ref{algo:DetailedCRFDmapSVDs:MaximalRank} to \ref{algo:DetailedCRFDmapSVDs:MaximalRankEnd} are executed. In the second call, applied to a projection of the input onto $\R_{r-1}^{m \times n}$, it holds that $\ushort{r} = r-1$, hence lines~\ref{algo:DetailedCRFDmapSVDs:NonMaximalRank} to \ref{algo:DetailedCRFDmapSVDs:NonMaximalRankEnd} are executed. If $r > 1$, then, besides matrix multiplications, the two calls require two evaluations of $\nabla f$, at most two times \eqref{eq:MaxNumIterationsDetailedCRFDmapSVDs} evaluations of $f$, one projection onto the closed cone $\mathcal{C}$, an SVD of this projection, two QR factorization(s) with column pivoting of a matrix with $m$ or $n$ rows and $r$ columns, and two small-scale SVDs.

\begin{algorithm}[H]
\caption{Detailed $\crfdr$ map}
\label{algo:DetailedCRFDRmap}
\begin{algorithmic}[1]
\Require
$(f, r, \alpha, \beta, c, \kappa_1, \Delta)$ where $f : \R^{m \times n} \to \R$ is differentiable with $\nabla f$ locally Lipschitz continuous, $r < \min\{m, n\}$ is a positive integer, $\alpha \in (0, \infty)$, $\beta, c \in (0, 1)$, $\kappa_1 \in (0, \frac{1}{2}]$, and $\Delta \in (0, \infty)$.
\Input
$(U, \Sigma, V)$ where $U \Sigma V^\tp \in \R_{\ushort{r}}^{m \times n}$ is an SVD and $\ushort{r} \in \{0, \dots, r\}$.
\Output
$(\tilde{U}, \tilde{\Sigma}, \tilde{V})$ where $\tilde{U}\tilde{\Sigma}\tilde{V}^\tp \in \R_{\tilde{r}}^{m \times n}$ is an SVD, $\tilde{r} \in \{0, \dots, r\}$, and $\tilde{U}\tilde{\Sigma}\tilde{V}^\tp \in \hyperref[algo:ERFDRmap]{\erfdr}(U \Sigma V^\tp; f, r, \alpha, \alpha, \beta, c, \kappa_1, 1, \Delta)$.

\State
$(\tilde{U}, \tilde{\Sigma}, \tilde{V}) \gets \text{Algorithm~\ref{algo:DetailedCRFDmapSVDs}}(U, \Sigma, V; f, r, \alpha, \beta, c, \kappa_1)$;
\If
{$\ushort{r} = r$ and $\Sigma(r, r) \le \Delta$}
\State
$(\tilde{U}^\mathrm{R}, \tilde{\Sigma}^\mathrm{R}, \tilde{V}^\mathrm{R}) \gets \text{Algorithm~\ref{algo:DetailedCRFDmapSVDs}}(U(\mathord{:}, 1\mathord{:}r-1), \Sigma(1\mathord{:}r-1, 1\mathord{:}r-1), V(\mathord{:}, 1\mathord{:}r-1); f, r, \alpha, \beta, c, \kappa_1)$;
\If
{$f(\tilde{U}^\mathrm{R} \tilde{\Sigma}^\mathrm{R} (\tilde{V}^\mathrm{R})^\tp) < f(\tilde{U} \tilde{\Sigma} \tilde{V}^\tp)$}
\State
$(\tilde{U}, \tilde{\Sigma}, \tilde{V}) \gets (\tilde{U}^\mathrm{R}, \tilde{\Sigma}^\mathrm{R}, \tilde{V}^\mathrm{R})$;
\EndIf
\EndIf
\State
Return $(\tilde{U}, \tilde{\Sigma}, \tilde{V})$.
\end{algorithmic}
\end{algorithm}

\subsection{Comparison with the $\rfdr$ map}
\label{subsec:ComparisonWithRFDRmap}
Table~\ref{tab:CostRFDRvsCRFDR} summarizes the operations required by \cite[Algorithm~8]{OlikierAbsil2023} and Algorithm~\ref{algo:DetailedCRFDRmap}, matrix multiplication excluded, in the case where $r > 1$, the input has rank $r$, and its smallest singular value is smaller than or equal to~$\Delta$. The only difference is that, where \cite[Algorithm~8]{OlikierAbsil2023} requires a large-scale SVD, Algorithm~\ref{algo:DetailedCRFDRmap} merely requires a projection onto $\mathcal{C}$, an SVD of this projection, and two QR factorizations with column pivoting of a matrix with $m$ or $n$ rows and $r$ columns.

\begin{table}[h]
\begin{center}
\begin{tabular}{*{3}{l}}
\hline
Operation & \cite[Algorithm~8]{OlikierAbsil2023} ($\rfdr$) & Algorithm~\ref{algo:DetailedCRFDRmap} ($\crfdr$)\\
\hline\hline
evaluation of $f$ & at most $2 \cdot \eqref{eq:MaxNumIterationsDetailedCRFDmapSVDs}$ & at most $2 \cdot \eqref{eq:MaxNumIterationsDetailedCRFDmapSVDs}$ \\
\hline
evaluation of $\nabla f$ & $2$ & $2$ \\
\hline
QR & $0$ & $2$ \\
\hline
small-scale SVD & $2$ & $2$ \\
\hline
large-scale SVD & $1$ & $0$ \\
\hline
SVD-projection onto $\mathcal{C}$ & $0$ & $1$ \\
\hline
\end{tabular}
\end{center}
\caption{Operations required by \cite[Algorithm~8]{OlikierAbsil2023} and Algorithm~\ref{algo:DetailedCRFDRmap}, matrix multiplication excluded, in the case where $r > 1$, the input has rank $r$, and its smallest singular value is smaller than or equal to $\Delta$. ``QR'' means QR factorization with column pivoting of a matrix with $m$ or $n$ rows and $r$ columns. ``SVD-projection onto $\mathcal{C}$'' means computing a projection onto $\mathcal{C}$ and writing it as an SVD, which is straightforward if $\mathcal{C}$ is chosen from Table~\ref{tab:ClosedConeExamples}.}
\label{tab:CostRFDRvsCRFDR}
\end{table}

\subsection{Comparison with the $\rfd$ map}
\label{subsec:ComparisonWithRFDmap}
$\rfd$ \cite[Algorithm~4]{SchneiderUschmajew2015} does not satisfy the property that accumulation points are B-stationary for~\eqref{eq:OptiDeterminantalVariety} \cite[\S 8]{OlikierAbsil2023} whereas, as shown above, CRFDR does. This comes at a computational cost which, however, is the least amongst all methods that satisfy the property. In the typical case where the sequence generated by $\rfd$ is contained in $\R_r^{m \times n}$, this computational overhead is twofold. First, the iterates of $\crfdr$ are stored as SVDs, while those of $\rfd$ merely need to be stored in a format that reveals the rank and orthonormal bases of the row and column spaces. Second, given an input $X \in \R_r^{m \times n}$ such that $\sigma_r(X) \le \Delta$, the $\crfdr$ map applies the $\crfd$ map to $\hat{X} \in \proj{\R_{r-1}^{m \times n}}{X}$; $\hat{X}$ is readily avaible since the iterates are stored as SVDs. This involves the following: (i) computing an arbitrary projection of $-\nabla f(\hat{X})$ onto a closed cone as in Definition~\ref{def:CRFD(R)map} and writing it as an SVD, which requires no computational overhead beyond evaluating $-\nabla f(\hat{X})$ if $\mathcal{C}$ is the first example given in Table~\ref{tab:ClosedConeExamples}; (ii) at most \eqref{eq:MaxNumIterationsDetailedCRFDmapSVDs} evaluations of $f$; (iii) two QR factorizations with column pivoting of a matrix with $m$ or $n$ rows and $r$ columns; (iv) one SVD of a matrix whose dimensions are at most $r$.

\section{Conclusion}
\label{sec:Conclusion}
$\pgd$, $\ppgdr$, $\rfdr$, $\hrtr$, and some of their hybridizations are the only algorithms in the literature guaranteed to generate a sequence in $\R_{\le r}^{m \times n}$ whose accumulation points are B-stationary for~\eqref{eq:OptiDeterminantalVariety} in the sense of Definition~\ref{def:B-Stationarity}. All these algorithms can require SVDs or eigenvalue decompositions that are computationally prohibitive in the typical case where $r \ll \min\{m, n\}$ if $\nabla f$ does not have low rank. Among them, $\rfdr$ is the most parsimonious with these computationally expensive operations.

The main contribution of this paper is $\crfdr$ (Definition~\ref{def:CRFD(R)map}), a modification of $\rfdr$ that generates a sequence in $\R_{\le r}^{m \times n}$ whose accumulation points are B-stationary for~\eqref{eq:OptiDeterminantalVariety}, while requiring none of the computationally expensive operations mentioned in the preceding paragraph (Table~\ref{tab:CostRFDRvsCRFDR}). In the class of algorithms generating a sequence in $\R_{\le r}^{m \times n}$ whose accumulation points are provably B-stationary for~\eqref{eq:OptiDeterminantalVariety}, $\crfdr$ is that with the smallest computational overhead compared with $\rfd$ \cite[Algorithm~4]{SchneiderUschmajew2015}, which is not in that class \cite[\S 8]{OlikierAbsil2023}; see Section~\ref{subsec:ComparisonWithRFDmap}. The measure of B-stationarity defined in~\eqref{eq:NormProjectionNegativeGradientOntoTangentConeDeterminantalVariety} converges to zero along bounded subsequences at a rate at least $O(1/\sqrt{i+1})$, where $i$ is the iteration counter. A side contribution of this paper is a rank-increasing scheme (Algorithm~\ref{algo:RankIncreasingERFDR}), which can be of interest if the parameter $r$ in~\eqref{eq:OptiDeterminantalVariety} is potentially overestimated. A promising perspective is to use $\crfdr$ to design hybrid algorithms based on the framework presented in \cite[\S 3]{OlikierGallivanAbsil2024}.


\bibliographystyle{siamplain}
\bibliography{CRFDR_bib}
\end{document}